\documentclass[]{amsart}

\usepackage[USenglish]{babel}
\usepackage[utf8]{inputenc}
\usepackage[T1]{fontenc}

\usepackage{amsfonts}
\usepackage{amssymb}
\usepackage{amsthm}
\usepackage{amscd}
\usepackage{amsxtra}
\usepackage{xy}
\usepackage{todonotes}
\usepackage{comment}

\usepackage{enumerate}
\usepackage{dsfont}
\usepackage{diagrams}
\usepackage{epic}
\usepackage{curves}
\usepackage[dvips]{rotating}

\usepackage{todonotes}
\usepackage[pdftitle={Lyapunov Exponents of Rank $2$-Variations of Hodge Structures and Modular Embeddings},
	    pdfauthor={Andre Kappes},
	    linktocpage=true
	    ]{hyperref}

\hyphenation{iso-metry Karls-ruhe auto-morphism auto-morphisms de-fi-nition bi-rational bi-ratio-nally hyper-elliptic equi-va-lence pro-jects Teich-m\"ul-ler Herr-lich}

% Spaces
\DeclareMathOperator{\Moduli}{\mathcal{M}}       %Modulraum
         %Abelian varieties
\DeclareMathOperator{\Teich}{\mathcal{T}}        %Teichmüllerraum
\DeclareMathOperator{\Stratum}{\Omega \mathcal{M}}      %Stratum gewisser hol. Differentiale
        %Quadratische Differentiale
\DeclareMathOperator{\XFamily}{\mathcal{X}}      %Familie von Kurven
\DeclareMathOperator{\XFamilyuniv}{\mathcal{X}_{\textrm{univ}}}      %universelle Familie von Kurven
          %Basis
        %Basiskurve
       %another curve
\DeclareMathOperator{\HH}{\mathbb{H}}            %upper half space
\DeclareMathOperator{\PP}{\mathbb{P}}            %projective space
\DeclareMathOperator{\DD}{\mathbb{D}}            %poincare disk
\DeclareMathOperator{\Graph}{\mathcal{G}}        %Graph

\DeclareMathOperator{\Odd}{\mathrm{odd}}         %odd index
       %even index
\newcommand{\univ}{\mathrm{univ}}		       %universal	

%Sheaves
	 %Sheaf of C^infty differential forms
	 %Sheaf
       %Sheaf
           %Structure sheaf
  %meromorphe Funktionen
\DeclareMathOperator{\Holomorphic}{\mathcal{O}}  %holomorphe Funktionen
     %harmonic forms
           %Continuous functions

\DeclareMathOperator{\ULocal}{\mathds{U}}
\DeclareMathOperator{\LLocal}{\mathds{L}}
\DeclareMathOperator{\VLocal}{\mathbb{V}}
\DeclareMathOperator{\WLocal}{\mathbb{W}}

%bundles
       %Vector bundle
\DeclareMathOperator{\VBundle}{\mathcal{V}}       %Vector bundle
       %Vector bundle
       %Vector bundle
\DeclareMathOperator{\UBundle}{\mathcal{U}}       %Vector bundle

%origamis
\DeclareMathOperator{\Mod27}{\mathbf{N}}
\DeclareMathOperator{\Qmod9}{\mathbf{M}}
\DeclareMathOperator{\L22}{\mathbf{L}_{2,2}}
\DeclareMathOperator{\CharL22}{\widetilde{\mathbf{St}_3}}

\DeclareMathOperator{\Ori}{\mathbf{O}}

%Categories
       %R-Moduls
     %Sheaves

% Fields and rings
\DeclareMathOperator{\K}{\mathbb{K}}		  %complex numbers
\DeclareMathOperator{\CC}{\mathbb{C}}		  %complex numbers
\DeclareMathOperator{\RR}{\mathbb{R}}		  %reals
\DeclareMathOperator{\NN}{\mathbb{N}}		  %natural numbers
\DeclareMathOperator{\ZZ}{\mathbb{Z}}		  %integers
		  %some field
		  %finite field
\DeclareMathOperator{\QQ}{\mathbb{Q}}		  %rationals
        %order in a number field

% arrows
\DeclareMathOperator{\ra}{\rightarrow}           %Pfeil

% Groups
\DeclareMathOperator{\GL}{\mathrm{GL}}           %General linear group
         %Symmetric group
\DeclareMathOperator{\Out}{\mathrm{Out}}         %outer Automorphismengruppe
\DeclareMathOperator{\Aut}{\mathrm{Aut}}         %Automorphismengruppe
         %Endomorphismenring
         %Homomorphismengruppe
\DeclareMathOperator{\PSL}{\mathrm{PSL}}         %Projektive spezielle lineare Gruppe
\DeclareMathOperator{\SL}{\mathrm{SL}}           %Spezielle lineare Gruppe
\DeclareMathOperator{\Sp}{\mathrm{Sp}}           %Symplektische Gruppe
\DeclareMathOperator{\SO}{\mathrm{SO}}           %Spezielle orthogonale Gruppe
\DeclareMathOperator{\Stab}{\mathrm{Stab}}       %Stabilisator einer Menge
       %Decktrafos
\DeclareMathOperator{\Gal}{\mathrm{Gal}}         %Galoisgruppe
         %Jacobian
         %Picard variety
         %Weil divisors
\DeclareMathOperator{\MCG}{\mathrm{MCG}}         %Teichmller modular group
\DeclareMathOperator{\Aff}{\mathrm{Aff}}         %Affine Diffeomorphismen
\DeclareMathOperator{\Trans}{\mathrm{Trans}}     %Gruppe der Translationen
   %Gruppe der Diffeomorphismen
       %Isometrien
         %Annulator

%Lie algebras
      %LA zur Symplektischen Gruppe

% Functors
	 %Grassmann variety
    %Filtration
%\renewcommand{\ker}{\mathrm{Kern}}               %Kern
        %Funktor
        %nochn Funktor
		 %exterior product

% Functions
\newcommand{\dd}{\mathrm{d}}			 	%dx
  %Partial derivative wrt. #1
\DeclareMathOperator{\der}{D}         		 	%Derivation eines affinen Diffeos
         		 	%Derivation eines affinen Diffeos

		 %Projection

           %Spur
\DeclareMathOperator{\id}{\mathrm{id}}           %Identität

\renewcommand{\Re}{\mathrm{Re\,}}        
\renewcommand{\Im}{\mathrm{Im\,}}       %Imaginärteil

%\DeclareMathOperator{\Arf}{\mathrm{Arf}}         %Arfinvariante
%\DeclareMathOperator{\Per}{\mathrm{Per}}         %Perioden

  %Kodaira-Spencer map
            %teilt
\DeclareMathOperator{\divisor}{\mathrm{div}}            %divisor
\DeclareMathOperator{\vol}{\mathrm{vol}}        %Volume
        %Co-Volume
\DeclareMathOperator{\rank}{\mathrm{rk}}         %Rang
\DeclareMathOperator{\dom}{\mathrm{dom}}           %domain
\DeclareMathOperator{\range}{\mathrm{im}}         %Range
        %isogen
\DeclareMathOperator{\isom}{\cong}               %isomorph
\DeclareMathOperator{\subgp}{\leqslant}          %subgroup
   %dominates

\DeclareMathOperator{\tensor}{\otimes}           %Tensor

         %order of a map at a point
\DeclareMathOperator{\hyp}{\mathrm{hyp}}         %hyperbolic
       %singular
\DeclareMathOperator{\Ad}{\mathrm{Ad}}           %Adjoint
\DeclareMathOperator{\Kern}{\mathrm{Ker}}        %Kernel
\DeclareMathOperator{\Comm}{\mathrm{Comm}}       %Commensurator
       %Span
\DeclareMathOperator{\Image}{\mathrm{Im}}        %Image
         %restriction of a representation
         %induced representation
\DeclareMathOperator{\ipair}{\mathit{i}}         %intersection pairing
\DeclareMathOperator{\per}{\mathit{p}}           %period map
\DeclareMathOperator{\Per}{\mathrm{Per}}         %period domain
       %dual

          %Epsilon
         %lokal
       %Support
		 %Normal subgroup or equal

         %fractional part
			 %conjugate
\newcommand{\ol}[1]{\overline{#1}}	  	 %overline
         	 %underline
\renewcommand{\mod}{\backslash}			 %Left action quotient

 	%Skalarprodukt <#1 , #2>
\newcommand{\set}[2]{\bigl\{#1\mid #2\bigr\}} 		%Menge der Form {#1 | #2}
 	%Präsentation einer Gruppe
\newcommand{\gen}[1]{\langle#1\rangle}			%thing generated by #1
%\newcommand{\Span}[1]{\mathrm{span}\left\{ #1 \right\}} %vector space span of #1

            %Fixpunkte

%\DeclareMathOperator{\torus2}{T^{\circ\bullet}}

% abbreviations
\newcommand{\eg}{e.\,g.\ }		%i.e. Abkuerzung
\newcommand{\ie}{i.\,e.\ }		%i.e. Abkuerzung
\newcommand{\wrt}{w.\,r.\,t.\ }		%with respect to Abkuerznung
	%without loss of generality
	%without loss of generality
	%MAGMA

% 2x2 matrices and 2-dim vectors
\newcommand{\textmatrix}[4]{\left(\begin{smallmatrix} #1&#2\\ #3&#4\\ \end{smallmatrix}\right)}

\newcommand{\OriSquare}[5]
{
\begin{xy}
(0,0)="Pos";
"Pos"+(1,0) **@{-}; ?*h!U!/^2pt/{\text{\scriptsize #5}}, 
"Pos"+(1,1) **@{-}; ?*h!L!/^1pt/{\text{\scriptsize #2}},
"Pos"+(0,1) **@{-}; ?*h!D!/^1pt/{\text{\scriptsize #3}},
"Pos" **@{-}; ?*h!R!/^1pt/{\text{\scriptsize #4}},
"Pos"+(0.5,0.5) *h{#1};
\end{xy}
}

% Comments?
% \includecomment{draftcomment}
\excludecomment{draftcomment}

%%%%%%%%%%%%%%%%%%%%%%%%%%%%%%%%%%%%%%%%%%%%%%%%%%%%%%%%%%%%%%%%%%%%
% Satz-Umgebungen definieren
%%%%%%%%%%%%%%%%%%%%%%%%%%%%%%%%%%%%%%%%%%%%%%%%%%%%%%%%%%%%%%%%%%%%
\theoremstyle{plain}
%\theoremheaderfont{\bfseries \upshape}
%\theorembodyfont{\itshape}
%\theorempreskipamount4ex
%\theorempostskipamount3ex
\newtheorem{prop}{Proposition}[section]
\newtheorem{thm}[prop]{Theorem}
\newtheorem{lem}[prop]{Lemma}

\theoremstyle{definition}
\newtheorem{defn}[prop]{Definition}

\theoremstyle{remark}
\newtheorem{rem}[prop]{Remark}
\newtheorem{exa}[prop]{Example}
%%%%%%%%%%%%%%%%%%%%%%%%%%%%%%%%%%%%%%%%%%%%%%%%%%%%%%%%%%%%%%%%%%%%
\numberwithin{equation}{section}

\setlength{\parindent}{2ex}

\title[Lyapunov Exponents of rank $2$-VHS]{Lyapunov Exponents of Rank $2$-Variations of Hodge Structures and Modular Embeddings}

\author[A. Kappes]{Andr\'e Kappes}
\address{Institut f\"ur Mathematik, Robert-Mayer-Str. 6--8,
 Goethe-Universit\"at Frankfurt/Main}
\email{kappes@math.uni-frankfurt.de}
\date{\today}
% \subjclass[2000]{14H10; 32G15; 14H30}

\thanks{The author is partially supported by ERC-StG 257137}
\begin{document}

\begin{abstract}
If the monodromy representation of a VHS over a hyperbolic curve stabilizes a rank two subspace, there is a single non-negative Lyapunov exponent associated with it. We derive an explicit formula using only the representation in the case when the monodromy is discrete.
\end{abstract}

\maketitle

\section{Introduction}
The Lyapunov exponents of a dynamical cocycle are usually hard to come by. The action of the Teichm\"uller geodesic flow on the relative cohomology bundle over the moduli space of curves $\Moduli_g$ is a striking exception, since much information can be obtained from a formula for the \textit{sum of the non-negative Lyapunov exponents} originally discovered by Kontsevich and Zorich \cite{kontsevichzorich} (see also \cite{forni02}, \cite{EKZbig}). It exploits a link between algebraic geometry and dynamical systems and expresses the sum as integrals over certain characteristic classes of $\Moduli_g$.
\par
Variants of this result are known to hold for subsets invariant under the flow such as Teichm\"uller curves, which are algebraic curves in $\Moduli_g$ isometrically embedded with respect to the Teichm\"uller metric. One can even replace the Teichm\"uller flow by the geodesic flow on an arbitrary hyperbolic curve $\HH/\Gamma$ (or more generally a ball quotient, see \cite{KapMoe12}) and an analogous formula will hold for the dynamical cocycle coming from the monodromy action of the fundamental group $\Gamma$ on the cohomology of a family of curves $\phi:\XFamily\to \HH/\Gamma$ (or more generally on a variation of Hodge structures (VHS) of weight one).
\par
In this paper, we focus on the situation when there exists a subbundle of rank two of such a relative cohomology bundle over a curve. It has only one non-negative Lyapunov exponent. Starting from the Kontsevich-Zorich formula, we show how to effectively compute this exponent only from the representation of the fundamental group.
\par
\begin{thm}\label{main thm}
 Let $\phi:\XFamily\to C$ be a family of curves over a non-compact algebraic curve $C=\HH/\Gamma$, and suppose there exists a rank $2$-submodule $V\subseteq H^1(\XFamily_c,\RR)$ invariant under the monodromy action $\rho_V$ of $\Gamma = \pi_1(C,c)$ such that $\rho_V(\Gamma)$ is a discrete subgroup of $\SL_2(\RR)$. 
\par
Then the non-negative Lyapunov exponent associated with $V$ is $0$ if $\Gamma$ acts as a finite group and is otherwise given by
\[\lambda = \frac{\vol(\HH/\rho_V(\Gamma))}{\vol(\HH/\Gamma)}\sum_{\Gamma_0\subgp\Gamma} (\Delta_0:\rho_V(\Gamma_0))\]
where $\Delta_0$ is a fixed parabolic subgroup of $\rho_V(\Gamma)$ and the sum runs over a system of representatives of conjugacy classes of maximal parabolic subgroups $\Gamma_0$ of $\Gamma$, whose generator is mapped to $\Delta_0 \setminus \{\pm I\}$.
\end{thm}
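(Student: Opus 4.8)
The plan is to start from the Kontsevich--Zorich-type formula for the single non-negative Lyapunov exponent $\lambda$ attached to the rank $2$-subbundle. After passing to a finite étale cover of $C$ if necessary (which changes neither side of the claimed identity, since both $\vol(\HH/\Gamma)$ and the index sum scale by the degree, and Lyapunov exponents are invariant under finite covers), we may assume $\Gamma$ is torsion-free and that the local system $V$ underlies a polarized $\RR$-VHS of weight one and rank two over $C$; equivalently, it is given by a representation $\rho_V\colon\Gamma\to\SL_2(\RR)$. When the image is finite the Deligne extension of the associated line bundle is a degree-zero bundle on $\ol C$, so $\lambda=0$; this disposes of the first case.

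In the discrete, infinite case, write $\rho_V(\Gamma)=:\Delta$, a lattice or thin group in $\SL_2(\RR)$. The Kontsevich--Zorich formula computes $\lambda$ as $\tfrac{2}{\vol(\HH/\Gamma)}\deg\LBundle$, where $\LBundle$ is the Deligne extension to $\ol C$ of the $(1,0)$-part of the Hodge bundle of $V$; so the whole problem is to compute $\deg\LBundle$. The first key step is to recognize that the period map of $V$ is, up to the action of $\Delta$, the identity inclusion $\HH\hookrightarrow\HH$, so $\LBundle$ is pulled back from the modular curve $X(\Delta):=\ol{\HH/\Delta}$ via the finite map $f\colon \ol C\to X(\Delta)$ induced by $\rho_V$. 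Concretely, $\LBundle$ is the Deligne extension of the natural automorphy-factor line bundle on $\HH/\Delta$ whose sections are weight-$1$ modular forms for $\Delta$. Hence $\deg\LBundle = \deg f \cdot \deg_{\mathrm{orb}}\LBundle_{X(\Delta)}$, up to correction terms supported at the cusps coming from the difference between the orbifold/stacky pushforward and the naive one — and this is exactly where the cusp contributions, hence the parabolic subgroups, enter. The quantity $\deg f$ is $\vol(\HH/\Delta)^{-1}\vol(\HH/\Gamma)\cdot[\text{something}]$... more precisely $\deg f = \tfrac{\vol(\HH/\Gamma)}{\vol(\HH/\Delta)}$ when $\Delta$ is a lattice, and the cusp-width ratios $(\Delta_0:\rho_V(\Gamma_0))$ record the ramification of $f$ over each cusp of $X(\Delta)$.

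The main technical step, and the one I expect to be the principal obstacle, is the precise local analysis at the punctures: for each puncture $p$ of $C$ lying over a cusp of $\HH/\Delta$, one must compare the local exponent of the Deligne extension $\LBundle$ near $p$ with the pullback of the local exponent of the weight-one automorphy bundle near the corresponding cusp. The ratio of cusp widths $(\Delta_0:\rho_V(\Gamma_0))$ is the local ramification index of $f$ there, where $\Gamma_0$ is the maximal parabolic (cyclic) subgroup of $\Gamma$ at $p$ and $\Delta_0$ the maximal parabolic of $\Delta$ at $f(p)$; the generator of $\Gamma_0$ maps into $\Delta_0\setminus\{\pm I\}$ by hypothesis, so $\rho_V(\Gamma_0)\subseteq\Delta_0$ is a finite-index subgroup and the index is well-defined. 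Assembling: $\deg\LBundle$ equals $\vol(\HH/\Delta)^{-1}$ times a sum over cusps of $C$ of these indices — with the standard normalization $\vol(\HH/\Delta)=2\pi(\cdots)$ one gets $2\deg\LBundle = \vol(\HH/\Delta)^{-1}\sum_{\Gamma_0}(\Delta_0:\rho_V(\Gamma_0))$ after grouping cusps of $C$ into $\Delta$-conjugacy classes. Dividing by $\vol(\HH/\Gamma)$ gives the stated formula. The remaining care is bookkeeping: checking that conjugacy classes of maximal parabolics in $\Gamma$ biject with punctures of $C$, that the sum is finite, and that the orbifold Gauss--Bonnet normalizations match the $\SL_2$ versus $\PSL_2$ conventions for $\vol$ consistently on both sides.
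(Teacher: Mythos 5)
Your overall architecture is the right one and matches the paper's: reduce to the Kontsevich--Zorich sum formula, realize the Hodge line bundle of $V$ as a pullback along a map to $\ol{\HH/\Delta}$, and read off the degree of that map from local data at the cusps, where the indices $(\Delta_0:\rho_V(\Gamma_0))$ appear as ramification indices. The finite-image case is also fine. But there is a genuine error at the heart of the discrete infinite case: the period map of $V$ is \emph{not} ``the identity inclusion $\HH\hookrightarrow\HH$ up to the action of $\Delta$''. That holds only for the uniformizing sub-VHS. In general the period map is an a priori arbitrary non-constant $\rho_V$-equivariant holomorphic map $p:\HH\to\HH$ (not an isometry, not a covering), and the map $f=\ol p:\ol C\to\ol{\HH/\Delta}$ it induces therefore does \emph{not} satisfy $\deg f=\vol(\HH/\Gamma)/\vol(\HH/\Delta)$; if it did, Proposition~\ref{prop:Rank2-formula} would force $\lambda=1$ identically. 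The parabolic indices are not ``correction terms'' between an orbifold and a naive pushforward: they \emph{are} the computation of $\deg\ol p$, via $\deg\ol p=\sum_{b\in\ol p^{-1}(c)}e(\ol p,b)$ for a single cusp $c$ of $\HH/\Delta$, with $e(\ol p,b)=(\Delta_0:\rho_V(\Gamma_0))$ (Lemma~\ref{lem:compute-degree}); the correct assembly is $\lambda=\deg(\ol p)\,\vol(\HH/\Delta)/\vol(\HH/\Gamma)$, and your line ``$2\deg\LBundle=\vol(\HH/\Delta)^{-1}\sum(\Delta_0:\rho_V(\Gamma_0))$'' is dimensionally inconsistent with this (the correct relation is $2\deg\LBundle=\deg(\ol p)\cdot\deg\omega_{\ol C}$ with $\deg\omega_{\ol C}$ proportional to $\vol(\HH/\Delta)$, not to its inverse).

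Two further steps are asserted rather than proved and are needed for the statement to make sense. First, you must show that $\Delta=\rho_V(\Gamma)$ is actually a lattice: the paper does this by applying the Schwarz lemma to the period map near each cusp of $\Gamma$ to conclude that parabolic generators go to parabolic or elliptic elements, so that $\ol p$ extends continuously to the compactifications and the cusp-compactification of $\HH/\Delta$ is forced to be compact. Second, you must rule out that the period map is constant (otherwise there is no map $\ol p$ at all); this uses that a constant period map would be fixed by all of $\rho_V(\Gamma)$, impossible for an infinite discrete group since point stabilizers in $\mathrm{PSL}_2(\RR)$ meet a discrete group in a finite set. Finally, you should note explicitly why only those $\Gamma_0$ whose generator maps to $\Delta_0\setminus\{\pm I\}$ contribute: cusps whose parabolic generator maps to an elliptic element or to $\pm I$ are sent by $\ol p$ to interior points of $\HH/\Delta$ and hence do not lie over the cusp $c$ used to compute the degree.
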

\par
If the relative cohomology of the family of curves over a (finite cover of a) Teichm\"uller curve has a rank-$2$ subbundle invariant under the flow and defined over $\QQ$, we can compute the associated Lyapunov exponent from the monodromy representation of the affine group. 
We carry this out for an example, where even a complete splitting into $2$-dimensional pieces is found.
\par
\begin{prop}\label{thm: example in genus 4}
 The Lyapunov spectrum of the Teichmüller curve generated by the square-tiled surface $(X,\omega)\in \Stratum_4(2,2,2)^{\Odd}$ given by
\[r = (1,4,7)(2,3,5,6,8,9)\quad \text{and}\quad u = (1,6,8,7,3,2)(4,9,5),\]
(see Figure \ref{fig:origami M}) is $1,\ \tfrac{1}{3},\ \tfrac{1}{3},\ \tfrac{1}{3},-\tfrac{1}{3},-\tfrac{1}{3},-\tfrac{1}{3},-1$.
\end{prop}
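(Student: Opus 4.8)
The plan is to split the weight-one variation of Hodge structures over the Teichmüller curve $C=\HH/\Gamma$ generated by $(X,\omega)$ into the tautological plane and three affine-invariant rank-$2$ sub-VHS, and to evaluate the Lyapunov exponent of each rank-$2$ piece by Theorem \ref{main thm}. Since $\dim H^1(X,\RR)=2g=8$, a decomposition $H^1(X,\RR)=T\oplus V_1\oplus V_2\oplus V_3$ into $\Gamma$-invariant pieces, with $T=\langle\Re\omega,\Im\omega\rangle$ the tautological plane and $\rank V_i=2$, accounts for all of $H^1$, and the Lyapunov spectrum is then the concatenation of the spectra of the summands. As $T$ contributes $\{1,-1\}$, it remains to prove $\lambda(V_1)=\lambda(V_2)=\lambda(V_3)=\tfrac13$.

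First I would do the combinatorics attached to the origami $O=(r,u)$. Letting the standard generators of $\SL_2(\ZZ)$ act on $O$, one reads off the orbit and hence the Veech group $\Gamma\subseteq\SL_2(\ZZ)$: its index (the orbit length), the number and widths of its cusps (from the cycle type of the horizontal permutation in each orbit representative), the elliptic points, and $\vol(\HH/\Gamma)$. Next I would write down the monodromy representation $\Gamma\to\Sp(H^1(X,\ZZ))\cong\Sp_8(\ZZ)$: lifts of the generators act on $H^1$ by explicit integer symplectic matrices obtained from the horizontal and vertical cylinder (Dehn-twist) decompositions, and inside this one singles out $T$, on which $\Gamma$ acts by the tautological embedding $\Gamma\hookrightarrow\SL_2(\RR)$.

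The substantive step is the rank-$2$ splitting of the $6$-dimensional complement $T^{\perp}$. I would produce it from the automorphism group $\Aut(O)$ — the centraliser of $\langle r,u\rangle$ in $S_9$ — which commutes with the affine action, so the isotypic decomposition of $H^1$ (over $\CC$, then regrouped over the field of definition) is $\Gamma$-invariant; since $\omega$ is $\Aut(O)$-invariant, $T$ lies in the trivial-character part, and one checks that the remaining summands $V_1,V_2,V_3$ each have rank $2$. If $\Aut(O)$ is too small for this, the same splitting should be visible directly by decomposing the $\QQ[\Gamma]$-module $\RR^8$ into irreducibles, the $V_i$ being possibly defined over a number field and permuted by its Galois group. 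For each $V_i$ one then identifies $\rho_{V_i}(\Gamma)\subseteq\SL_2(\RR)$ explicitly — I expect it to be conjugate to a finite-index subgroup of $\SL_2(\ZZ)$, hence discrete, so Theorem \ref{main thm} applies — together with $\vol(\HH/\rho_{V_i}(\Gamma))$ and, for a fixed parabolic $\Delta_0$ of $\rho_{V_i}(\Gamma)$, the list of cusps $\Gamma_0$ of $\Gamma$ mapping into $\Delta_0$ with their indices $(\Delta_0:\rho_{V_i}(\Gamma_0))$; summing yields $\lambda(V_i)$.

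The main obstacle is exactly this last analysis: establishing that $T^{\perp}$ is $2+2+2$ rather than irreducible of rank $4$ or $6$ (in the rank-$4$ case Theorem \ref{main thm} would not directly apply), and then pinning down each $\rho_{V_i}(\Gamma)$ and its cusp widths precisely enough to compute the index sum. If the three $V_i$ turn out to be isomorphic as VHS with monodromy — e.g. permuted by automorphisms of $X$ — their exponents agree automatically; otherwise one computes all three. As a consistency check I would use the Eskin–Kontsevich–Zorich formula: for the stratum $(2,2,2)$ the Hodge-bundle term is $\tfrac1{12}\cdot3\cdot\tfrac{2\cdot4}{3}=\tfrac23$, so the cylinder (Siegel–Veech) term must contribute $2-\tfrac23=\tfrac43$, matching $\sum_{i=1}^4\lambda_i=1+3\cdot\tfrac13=2$; together with the symmetry $\lambda_i=-\lambda_{9-i}$ this gives the asserted spectrum $1,\tfrac13,\tfrac13,\tfrac13,-\tfrac13,-\tfrac13,-\tfrac13,-1$.
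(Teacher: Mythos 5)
Your overall strategy --- decompose $H^1(X,\RR)$ into the tautological plane plus three rank-$2$ invariant pieces and evaluate each piece with Theorem~\ref{main thm}, the negative half of the spectrum following by symmetry --- is exactly the paper's, and your consistency check against the sum formula is sound. Where you genuinely diverge is in how the $2+2+2$ splitting of the six-dimensional complement is produced, which you correctly identify as the substantive step. You propose to extract it from the isotypic decomposition under the automorphism group of the origami (the paper's Proposition~\ref{prop:Splitting by Galois group}), with a fallback to a direct decomposition of the $\QQ[\Gamma]$-module. The paper instead uses its other splitting principle, Proposition~\ref{prop:Splitting by covering map}: the origami $\Qmod9$ sits in the poset of intermediate covers of the characteristic origami $\CharL22$ and admits two distinct Veech coverings onto genus-$2$ origamis in $\Stratum_2(2)$; pulling back their non-uniformizing parts yields $V_1$ and $V_2$ (invariant under $\Gamma(2)$, which suffices since the spectrum is unchanged under finite-index passage), and these carry the known exponent $\tfrac{1}{3}$ for free, while $V_3$ is the explicit complementary representation $\rho_{\Qmod9,4}(T^2)=T^{-1}S$, $\rho_{\Qmod9,4}(S)=S^{-1}$, whose exponent is computed by the cusp analysis of Lemma~\ref{lem:compute-degree} exactly as you describe ($\deg\ol{p}=1$, hence $\lambda=\vol(\HH/\SL_2(\ZZ))/\vol(\HH/\Gamma_{\Theta})=\tfrac{1}{3}$). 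The covering route buys the splitting and two of the three exponents simultaneously, and removes any worry that the complement might be an irreducible rank-$4$ or rank-$6$ piece (where Theorem~\ref{main thm} would not apply); your route is more intrinsic but leaves the existence of the splitting, and the identification of each $\rho_{V_i}(\Gamma)$ with its cusp widths, as explicit matrix computations you have not carried out.
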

\par
Besides the Kontsevich-Zorich formula, the proof of Theorem~\ref{main thm} makes use of the period map $p:\HH\to \HH$ from the universal covering of $C$ to the classifying space of Hodge structures of weight one on a two-dimensional $\RR$-vector space. This map is equivariant for the two actions of $\Gamma$ and, in case $\rho_V(\Gamma)\subseteq \SL_2(\RR)$ is discrete and not finite, descends to a holomorphic map $\ol{p}$ between algebraic curves. The main observation is that the line bundle is a pullback of the cotangent bundle by $\ol{p}$, and that one can compute the degree of $\ol{p}$ by looking at the cusps. 
\par
From an abstract point of view, Theorem~\ref{main thm} deals with pairs $(p,\rho)$ of a homomorphism $\rho:\Gamma \to \SL_2(\RR)$ from a cofinite Fuchsian group $\Gamma$ and a holomorphic map $p:\HH\to\HH$ equivariant for the actions of $\Gamma$ and $\rho(\Gamma)$, which we call \textit{modular embeddings}. We show that these are rigid in the sense that $p$ and $\rho$ almost uniquely determine each other, a fact that has been remarked in \cite{mcmullenbild} for Teichm\"uller curves in genus $2$. Moreover, we introduce the notion of \textit{(weak) commensurability} of two modular embeddings (they must agree (up to conjugation) on some finite index subgroup). It follows that the Lyapunov exponent of a modular embedding is a weak commensurability invariant. We also investigate the commensurator of a modular embedding and show that it contains $\Gamma$ as a subgroup of finite index if $\rho$ has a non-trivial kernel.
\par
Every rational number in $[0,1]$ is a Lyapunov exponent of a Teichm\"uller curve in $\Moduli_g$ as can be deduced e.g. from \cite[Theorem 4.5]{bouwmoel}, \cite[Prop. 2]{cyclicekz} or \cite[Theorem 1.3]{Wright12-1}. However the denominator of the rational numbers that can be reached depends on $g$. In Proposition~\ref{prop: every ratl is lyap}, we combine the discussion of modular embeddings with Theorem~\ref{main thm} to obtain the same result by pulling back the universal family of elliptic curves via a complicated map. The resulting family will of course not map to a Teichm\"uller curve in moduli space.
\par
\subsection*{References} Previously, period maps have been used to compute the individual Lyapunov exponents of Teichm\"uller curves coming from abelian covers of $\PP^1$ \cite{Wright12-1}. In this situation, the period maps are Schwarz triangle maps, the monodromy is a possibly indiscrete triangle group, and the Lyapunov exponents are quotients of areas of hyperbolic triangles. Other examples, where individual Lyapunov exponents have been obtained by computing the degrees of line bundles, are the Veech-Ward-Bouw-M\"oller-Teichm\"uller curves \cite{bouwmoel}, \cite{Wright12-2}, cyclic covers of $\PP^1$ \cite{cyclicekz} and more generally Deligne-Mostow ball quotients \cite{KapMoe12}.
\par
Modular embeddings of $\HH$ into a product $\HH^k$ have been studied \eg in \cite{CoWo90} 
for the action of a Schwarz triangle group on the left and the direct product of 
its Galois conjugates on the right (where $k$ is the degree of the trace field over $\QQ$) or 
for non-arithmetic Teichm\"uller curves in \cite{moeller06}, \cite{mcmullenbild} for the action of the Veech group and its Galois conjugates.
%  or
% in \cite{HirzebruchZagier} for $k=2$ as modular curves embedded in Hilbert modular surfaces.
Our definition relates to theirs (for $k=2$) if 
one considers the $(\id,\rho)$-equivariant embedding $\HH\to \HH\times \HH$, $z\mapsto (z,p(z))$.
\par
\subsection*{Structure of the paper} The paper is organized as follows. Section \ref{sec:background} contains the necessary background on Teichm\"uller curves, variations of Hodge structures and Lyapunov exponents. Section \ref{sec:proofs} contains the proof of Theorem~\ref{main thm}. In Section \ref{sec:examples}, we discuss an algorithmic approach to the computation of Lyapunov exponents and present two examples, the one stated above being among them. Finally, in Section \ref{sec:period data}, we discuss various properties of modular embeddings.
\par
\subsection*{Acknowledgements} 
This paper grew out of the author's Ph.D. thesis \cite{KappesThesis}. The author thanks his advisors Gabi Weitze-Schmith\"usen, Frank Herrlich and Martin M\"oller for their support and the helpful discussions that led to his thesis and this paper. He also thanks Alex Wright for his comments on an earlier version of this paper.
\par

%%%%%%%%%%%%%%%%%%%%%%%%%%%%%%
\section{Background} \label{sec:background}
%%%%%%%%%%%%%%%%%%%%%%%%%%%%%%
In this section, we recall the concept of a variation of Hodge structures, the definition of the period map and the Kontsevich-Zorich formula and then specialize to the case of Teichm\"uller curves.
\par
\subsection{Variations of Hodge structures}
Let $C$ be a smooth algebraic curve over $\CC$, embedded in a projective 
curve $\ol{C}$. A family $\phi:\XFamily \to C$ of smooth curves defines a $\ZZ$-local
system $\VLocal = R^1\phi_*\ZZ$ on $C$, whose associated holomorphic vector bundle comes with a
 holomorphic subbundle $\VLocal^{1,0}\subset \VLocal\tensor_{\ZZ}\Holomorphic_C$, inducing
the Hodge decomposition of the cohomology in each fiber $\XFamily_c= \phi^{-1}(c)$.
This object, which is
actually the family of Jacobians associated with $\phi$, has been abstractly studied under the 
name \textit{variation of Hodge structures of weight $1$}; these consist of a $K$-local system 
$\VLocal$ on $C$ ($K$ a noetherian subring of $\RR$) and
a holomorphic subbundle $\VLocal^{1,0}\subset\VLocal\tensor_{K}\Holomorphic_C$ inducing a 
Hodge structure in each fiber.
% Note that in the case $K=\CC$, we no longer require symmetry of the Hodge
% decomp.
\par
Important for the study of variations of Hodge structures
 is the presence of a \textit{polarization}, which in our case is the 
intersection pairing on (co-)homology. It is defined as a locally constant alternating bilinear 
form $Q:\VLocal \tensor \VLocal \to K$ such that its $\CC$-linear extension satisfies
 the Riemann bilinear relations $Q(\VLocal^{1,0},\VLocal^{1,0}) = 0$ and $iQ(v,\ol{v}) > 0$ for
 non-zero $v\in \VLocal^{1,0}$. The norm $\|\cdot\|$ associated with the positive definite hermitian 
form $\tfrac{i}{2}Q(v,\ol{w})$ on $\VLocal^{1,0}$ on $\VLocal_{\RR}$ by 
\[\|v\| = \tfrac{i}{2}Q(v^{1,0},\ol{v^{1,0}})\]
(where $v^{1,0}$ denotes the projection of $v\in \VLocal_ {\RR}$ to $\VLocal^{1,0}$) is called Hodge norm. In the following, we write VHS for ``polarized variation of Hodge structures of weight 1''.
\par
By a \textit{local monodromy} of $\VLocal$ about a puncture $c\in \ol{C}\setminus C$, we shall understand the action 
of a small loop about $c$ on the fiber $\VLocal_c$ of a nearby point $c$. If $K$ is a number field, then by a Theorem of Borel, these 
transformations are always quasi-unipotent. 
If they are unipotent, then there is a canonical extension
 due to Deligne of $\VLocal\tensor_{K}\Holomorphic_C$ to a vector bundle $\VBundle$ on $\ol{C}$. The 
extension of the $(1,0)$-part inside $\VBundle$ will be denoted $\VBundle^{1,0}$.
\par
The (global) \textit{monodromy} is the linear representation of $\pi_1(C,c)$ on $\VLocal_c$ associated with the local
system $\VLocal$ (and uniquely determined up to conjugation).
\par
A standard reference for variations of Hodge structures is \cite{CMSP}.
% the notation $\VLocal$ will mean a polarized VHS of
% weight $1$ on a
% local system of finitely generated free $\ZZ$-modules, whereas for a field
% $K$, usually taken to be $\QQ$, $\RR$ or $\CC$, we denote the
% associated polarized $K$-VHS by $\VLocal_K = \VLocal\tensor_{\ZZ}K$.

\subsubsection{Decomposition of a VHS}
% Our goal, among others, is to determine for a given family of curves a decomposition of the associated VHS. 
% There are two important results for polarized VHS that we will make use of: 
By the work of Deligne, the category of $\CC$-VHS on a
quasiprojective base is semisimple. More precisely \cite{delfinitude},
 if $\VLocal$ is a VHS on a smooth quasiprojective algebraic variety $X$ over $\CC$, then
\begin{align}\label{eqn:Deligne-decomp}
\VLocal \isom \bigoplus_i \VLocal_i\tensor W_i 
\end{align}
where $\VLocal_i$ are irreducible local systems, $\VLocal_i\ncong \VLocal_j$ for
$i\neq j$ and $W_i$ are complex vector spaces. Moreover, each $\VLocal_i$ carries a 
 VHS unique up to shifting of the bigrading, such that 
\eqref{eqn:Deligne-decomp} is an isomorphism of VHS.
% \par
% Secondly, the category
%  of VHS on such a base satisfies a certain rigidity property: any morphism $\VLocal\to \WLocal$ of local 
%  systems carrying a VHS, which at one point is a 
% morphism of Hodge structures, is already a morphism of VHS.

\subsubsection{The period map and the period domain}
Let $x\in C$ be a base point and let $\VLocal$ be a VHS on $C$. The underlying local system
corresponds to the monodromy action of $\pi_1(C,x)$ on the fiber $\VLocal_x$ by continuation of
local sections along paths. The distinguished subspace $\VLocal^{1,0}_x$ of the Hodge filtration 
will be moved by this action; this movement is recorded by the \textit{period map} $\per:\tilde C\to \Per(\VLocal_x)$, which is 
a holomorphic map from the universal cover $u:\tilde C\to C$ to the \textit{period domain} $\Per(\VLocal_x)$, the classifying space of polarized 
Hodge structures that can be put on $\VLocal_x$. 
\par
The period map can be described in the following way: 
On $\tilde C$, the local system can be
globally trivialized by the constant sheaf $V$ of fiber $\VLocal_x$
 and the inclusion $u^*\VLocal^{1,0}\to V$ yields for every point $z\in \tilde C$
a Hodge structure on $V_z\isom \VLocal_x$, thus a point $\per(z)\in \Per(\VLocal_x)$.
%  A polarized VHS $\VLocal$ on $C$ defines a
% holomorphic map $\per: \tilde C \to \Per(\VLocal_x)$ from the universal 
% cover of $C$ to the period domain, the classifying space of polarized Hodge
% structures of a given type that can be put on $\VLocal_x\tensor\RR$. 
The fact that $u^*\VLocal^{1,0} \to V$ is an inclusion of sheaves with $\pi_1(C,x)$-action, 
corresponds to the map $\per$ being equivariant with respect to the action 
of $\pi_1(C,x)$ on $\tilde C$
by deck transformations and on $\Per(\VLocal_x)$ by the monodromy action.
\par
In the case of an $\RR$-VHS of weight $1$
and rank $2k$, $\Per(\VLocal_x) \isom \HH_k$, the Siegel upper halfspace
of dimension $k$ and the monodromy is a
representation of $\pi_1(C,x)$ into $\Sp_{2k}(\RR)$.
\par
A VHS $\VLocal$ on a curve $C$ is called \textit{uniformizing} if its period
map is biholomorphic. In this case, 
$(\VBundle^{1,0})^{\tensor 2} \isom \Omega^1_{\ol{C}}(\log S)$
where $S = \ol{C}\setminus C$ is the finite set of cusps. This isomorphism is given by the Kodaira-Spencer map, the
 only graded piece of the Gau\ss-Manin connection.
\par
In particular, there is a tautological uniformizing VHS on each period domain and each VHS 
is equal to the pullback of a tautological VHS on its period domain via the period map. 
We sketch this for a rank $2$-VHS, \ie $k=1$.
\par
Suppose we are given a holomorphic map 
$p:\tilde C \to \HH$ from the universal cover of a curve $C$, together
with a group homomorphism $\rho:\pi_1(C,x) \to \Sp_{2}(\RR) = \SL_2(\RR)$. 
The trivial bundle $\tilde C\times \RR^2 \to \tilde C$ acquires a $\pi_1$-action by
\[(z,v) \mapsto (\gamma(z), \rho(\gamma)(v)),\quad \gamma\in\pi_1(C,x),\quad \rho(\gamma) = \textmatrix{a}{b}{c}{d}\]
and hence gives rise to an $\RR$-local system $\VLocal$ on $C$ since the transition matrices are constant. In the same way, the trivial line bundle $\tilde C\times \CC \to \tilde C$ is acted upon by $\pi_1(C,x)$ by
\[(z,\lambda) \mapsto (\gamma(z), (cz+d)^{-1}\lambda)\]
and the inclusion
\[\tilde C\times \CC \to \tilde C\times \CC^2,\quad (z,\lambda) \mapsto (z, \lambda(p(z),1)^T)\]
is $\pi_1$-equivariant and hence descends to an inclusion of vector bundles $\VLocal^{1,0} \to \VLocal \tensor_{\RR} \Holomorphic_C$ on $C$.
Since $\per(\HH)\subseteq \HH$, the standard symplectic form on $\RR^2$ with matrix $\textmatrix{0}{1}{-1}{0}$ furnishes a polarization of this VHS. Moreover, if $\range(\rho)\subseteq \SL_2(\ZZ)$, then the lattice $\tilde C\times \ZZ^2 \subset \tilde C\times \RR^2$ is preserved and descends to a $\ZZ$-local system $\VLocal_{\ZZ}$ on $C$. We put $\VLocal^{0,1} = \VLocal\tensor_{\RR}\Holomorphic_C/\VLocal^{1,0}$. The quotient of $\VLocal^{0,1}$ by the image of $\VLocal_{\ZZ}$ is then a family of elliptic curves.
\par
\subsection{Teichm\"uller curves}
%A Teichm\"uller curve is an algebraic curve $C$ in the moduli space of curves 
%$\Moduli_g$, which is totally geodesic with respect to the Teichm\"uller metric.
We recall the basic definitions for Teichm\"uller curves and show that they fit into 
the above abstract setting with the slight modification that we have to deal with orbifold 
fundamental groups. Good surveys on this subject
 are e.g. \cite{mcmullenbild}, \cite{moelPCMI}, \cite{HubertSchmidt06} or \cite{HerSch06}.
\par
It is well-known that every Teichm\"uller curve in $\Moduli_g$ arises as the composition of a
Teichm\"uller embedding $j: \HH \to \Teich_g$ with the natural projection
$\Teich_g\to \Moduli_g$, and that a Teichm\"uller embedding is in turn determined by
a pair $(X,q)$ of a compact Riemann surface $X$ with a non-zero quadratic
differential $q$. Using a canonical double covering construction one can confine oneself
to $q= \omega^2$, where $\omega$ is a holomorphic $1$-form. Then the natural atlas on $X\setminus \divisor(\omega)$
obtained by locally integrating $\omega$ has only translations as transition maps, 
and we call the pair $(X,\omega)$ a \textit{translations surface}.
\par
Let $\Stratum_g$ be the moduli space of translation surfaces. It is stratified by the number of zeros of $\omega$. For a partition $(\kappa_1,\dots,\kappa_r)$ of $2g-2$, let 
 $\Stratum_g(\kappa_1,\dots,\kappa_r)$ denote the moduli space of translation surfaces $(X,\omega)$, 
where $\omega$ has $r$ zeros with multiplicities $\kappa_1,\dots,\kappa_r$.
\par
A homeomorphism $f:X\to X$ is called \textit{affine} if it acts as an affine linear map in the charts of
the translation structure. This is the case if and only if
its action on $H^1(X,\RR)$ preserves the subspace spanned by $\Re\omega$, $\Im\omega$. 
The group of all orientation-preserving affine homeomorphisms is denoted
by $\Aff(X,\omega)$. 
\par
Taking the derivative of an affine map induces a group homomorphism 
\[\der : \Aff(X,\omega)\ra \SL_2(\RR),\]
whose image is called the \textit{Veech group} $\SL(X,\omega)$ and whose kernel is the \textit{group of translations} $\Trans(X,\omega)$. 
The Veech group is a nonuniform discrete subgroup of $\SL_2(\RR)$ and a lattice if and only if the Teichm\"uller embedding associated with $(X,\omega)$ leads to a Teichm\"uller curve. In this case, we call the associated surface $(X,\omega)$ a 
\textit{Veech surface} and say that the Teichm\"uller curve is generated by $(X,\omega)$.
\par
The affine group acts naturally as a subgroup of the mapping class group $\MCG_g$ on the
Teichm\"uller disk, respectively as a group of orientation preserving isometries 
on $\HH =
\SO(2)\mod\SL_2(\RR)$ by the representation $\der$, and the Teichm\"uller embedding is equivariant for these
two actions. This action need not be free, but the kernel $\Aut(X,\omega)$ of affine biholomorphisms of $X$ is always finite. If $(X,\omega)$ generates the Teichm\"uller curve $C$, the curve $\HH/\Aff(X,\omega)$ is the normalization of $C$ and $\Aff(X,\omega)$ is the orbifold fundamental group.
In particular, if we view the inclusion $\ol{\jmath}: \HH/\Aff(X,\omega) \to \Moduli_g = \Teich_g/\MCG_g$ as an inclusion 
of orbifolds or stacks, we can pull back the universal family over $\Moduli_g$ to obtain a canonical family of curves over 
a Teichmüller curve. However, to avoid the notion of stacks, we always pass to a suitable finite 
index subgroup $\Gamma\subgp \Aff(X,\omega)$, where a map to a fine moduli space
and thus a family $\phi:\XFamily\to \HH/\Gamma$ exists (see \cite[1.4]{moeller06} for details).
\par
% not yet needed
% We shall also use the composition $\Pder$ of $\der$ with the projection $\SL_2(\RR)\to \PSL_2(\RR)$.
%  Note that $f\in \Aff(X,\omega)$ is holomorphic iff $\der(f) = \{\pm I\}$, \ie $f\in \Kern(\Pder)$.
%  The image of $\Pder$ is called the Veech group $\Gamma(X,\omega)$.
% vielleicht spaeter noch rein!
%  Note that this is not the
% usual action by M\"obius transformations. 
\subsubsection{Origamis}
An \textit{origami}, also called \textit{square-tiled surface} is a translation surface $\Ori = (X,\omega)$
together with a holomorphic map $p:\Ori \to E= \CC/\ZZ\oplus i\ZZ$ such that $p$ is ramified at most over one point $e\in E$, and such that $\omega = p^*\dd z$.
\par
Origamis give rise to Veech surfaces, since their Veech groups are commensurable with $\SL_2(\ZZ)$. If 
$\Ori$ is \textit{primitive}, \ie $p$ does not factor into $f\circ p'$ where $f:E'\to E$ is an isogeny between genus $1$-surfaces of degree $>1$, then $\SL_2(\ZZ)$ is a subgroup of finite index of $\SL_2(\ZZ)$. The same holds if we consider instead $\Ori^* = \Ori\setminus p^{-1}(e)$ and affine maps preserving $p^{-1}(e)$.
\par
An origami of degree $d$ is conveniently described by two permutations $r$, $u\in S_d$ that prescribe 
how $d$ unit squares are glued along their edges: we identify the right (respectively upper) edge of  square $i$ with the left (respectively lower) edge of square $r(i)$ (respectively $u(i)$). If the subgroup generated by $r$ and $u$ acts transitively, then the resulting topological space is connected and the tiling by squares defines a covering map to $E$, ramified at most over $\ol{0}\in E$.
\par
More on origamis can be found e.g. in \cite{gabialgo} or \cite{ZmiaikouThesis}.

\subsubsection{Monodromy representation}
The \textit{monodromy representation} of the orbifold fundamental group $\Aff(X,\omega)$ of a Teichm\"uller curve 
is the representation 
\[\rho:\Aff(X,\omega) \to \Sp(H^1(X,\ZZ),\ipair^*),\quad f\mapsto (f^{-1})^*\]
It respects the algebraic intersection pairing $\ipair^*$ on cohomology. 
One can show that $\rho$ is actually injective and that $\rho$, restricted to a suitable
finite index subgroup where the family $\phi:\XFamily\to \HH/\Gamma$ exists, 
is the monodromy representation associated with $R^1\phi_*\ZZ$ (see \cite{Bauer09} for the proof of both statements).
% To actually make a
% connection of $\rho$ with a monodromy representation of a family of curves, we
% need to pass to an appropriate finite cover of the Teichm\"uller curve,
% respectively $\HH/\Gamma(X,\omega)$. By \ref{} we can choose a finite index
% subgroup $\Gamma\subgp \Aff(X,\omega)$ such that the map $\HH/\Gamma \to
% \Moduli_g$ factors over a fine moduli space. 
\par
% equivariance for factors
In the case of a Teichm\"uller curve, the equivariance carries over to the
possibly non-free action of $\Aff(X,\omega)$ on $\HH$ and on $\HH_k$ via its
monodromy representation.
This is easily seen as follows. The Teichm\"uller
embedding $j:\HH\to \Teich_g = \Teich(X)$ associated with $(X,\omega)$ is
equivariant
with respect to the action of $f\in \Aff(X,\omega)$ by $\der(f)$ on $\HH$ and by
its action as element of the mapping class group, that sends the marked
Riemann 
 surface $(X_\tau,m_{\tau})$ to $(X_{\tau}, m_{\tau}\circ f^{-1})$. The
natural 
map $t: \Teich(X) \to \HH_g$ is in turn equivariant with respect to
the Torelli 
morphism $\MCG_g \to \Sp(2g,\ZZ)$, $f\mapsto (f^{-1})^*$. The
period map $p_{\phi_{\univ}}$ of the
 pullback family $\phi_{\univ}:\XFamily = \XFamilyuniv \times_j \HH \to \HH$ of the
universal family of curves $\XFamilyuniv \to \Teich_g$ is now given as the
composition of $t\circ j$.
\par
If $\Gamma$ is a finite-index subgroup preserving a subspace $W$ of $H^1(X,\RR)$, then
the associated representation will induce a sub-local system $\WLocal$ of
 $R^1\phi_*\RR$ 
on some $\HH/\Gamma'$ for a suitable finite index subgroup $\Gamma'\subgp \Gamma$. 
Applying Deligne's semisimplicity result,
we find that $\WLocal$ carries a VHS, and $R^1\phi_*\RR = \WLocal \oplus \tilde \WLocal$
where $\tilde \WLocal$ is the complement of $\WLocal$.
\begin{draftcomment}
\textit{needs some more justification concerning that
  splitting is preserved by $\Gamma$. Not really, is implicitly given!}
\end{draftcomment}
Therefore, we can find a trivialization of the pullback local system 
on $\HH$, \ie a basis
of $H_1(X,\RR)$ such that with respect to this basis, the period map $p_{\phi_{\univ}}$ is given as
\[z\mapsto  \begin{pmatrix}
                      Z_1(z) & 0 \\
		      0  & Z_2(z)
                     \end{pmatrix} \in \HH_g,\]
where $Z_1$ and $Z_2$ are square matrices of dimensions $\rank \WLocal^{1,0}$ and $\rank\tilde \WLocal^{1,0}$.
This map is equivariant for all $\gamma\in \Aff(X,\omega)$ such that $\rho(\gamma)$ respects the 
decomposition $W \oplus \tilde W$. In particular, the period map 
\[p_W : \HH \to \HH_{\rank W}, \quad z\mapsto Z_1(z)\]
associated with the VHS $\WLocal$ 
is $\Gamma$-equivariant (and not just $\Gamma'$-equivariant).

\subsubsection{The VHS of the family of curves over a Teichm\"uller curve}

Using Deligne's result, M\"oller characterizes the VHS on a Teichm\"uller curve
\cite{moeller06} generated by a translation surface $(X,\omega)$. After
passing to a finite cover, the VHS on a Teichm\"uller
curve always admits a uniformizing direct factor $\LLocal$ in its VHS,
 defined over the trace field of $\SL(X,\omega)$, 
whose local system is given by the Fuchsian
representation $\der$ of $\Aff(X,\omega)$. Conversely, he shows that if a family of
curves $\phi:\XFamily\to C$ over a curve $C$ has a uniformizing direct summand
$\LLocal$ in its $\RR$-VHS $R^1\phi_*\RR$, then $C$ is a finite cover of a
Teichm\"uller curve.
%
%Q: Does non-compactness enter here? Or is this a consequence of the proof}
%A: No, it is apparently a consequence of the proof. Do we have an independent
%   proof establishing noncompactness?
%

% 
%  More precisely, if $C = \HH/\Gamma$ with
% $\Gamma\subgp \Aff(X,\omega)$ an appropriate finite index subgroup, and if $c\in
% C$ is a point corresponding to $(X,\omega)$, then the stalk of $\LLocal_{\RR}$
% at $c$ is spanned by $\Re\omega$, $\Im\omega$ and the Hodge decomposition is
% $\LLocal_{\CC} = \CC\omega \oplus \CC\ol{\omega}$. With respect to this basis,
% $f\in \Gamma$ acts by $(D(f)^{-1})^T$. 
\par
\subsection{Lyapunov exponents}
Lyapunov exponents are characteristic numbers associated with certain dynamical
systems. In our case of a $\RR$-VHS $\VLocal$ on a hyperbolic curve $C =
\HH/\Gamma$, they measure the logarithmic growth rate of the Hodge norm of a
vector in $\VLocal_x$ when being dragged along a generic (\wrt the Haar measure) geodesic on
$\HH/\Gamma$ under parallel transport. 
\par
For an $\RR$-VHS of rank $2k$, the Lyapunov spectrum consists of $2k$
exponents, counted with multiplicity that group symmetrically around $0$
\[\lambda_1\geq \dots \geq \lambda_k \geq 0 \geq \lambda_{k+1} = -\lambda_{k}
\geq \dots \geq \lambda_{2k} = -\lambda_1.\]
One usually normalizes the curvature in order that $\lambda_1 =1$ 
($K = -4$ in the case of hyperbolic curves). In the case of a 
Teichm\"uller curve, we further have $\lambda_1 = 1 > \lambda_2$. In general,
virtually all knowledge about individual exponents stems from using variants of
a formula for the sum over the first half of the spectrum which we refer 
to as the \textit{non-negative Lyapunov spectrum} in the following. 
This formula is originally due to Kontsevich and Zorich \cite{kontsevichzorich},
and was rigorously proved in \cite{bouwmoel}, \cite{EKZbig} or
\cite{forni02}. A variant of it can be stated as follows.
%  (see \cite{KapMoe12} for a proof in this case).

\begin{thm} \label{thm:sumformula}
 Let $\VLocal$ be an $\RR$-VHS of weight $1$ and rank $2k$ on a (possibly non-compact) curve $C =
\HH/\Gamma$. Then the non-negative Lyapunov exponents
$\lambda_1,\dots,\lambda_k$ of $\VLocal$ satisfy
\begin{align} \label{eq:sum-lyap-formula}
\lambda_1+ \dots +\lambda_k =
\frac{2 \deg(\VBundle^{1,0}) }{2g(\ol{C}) -2 + s} 
\end{align}
where $\ol{C}$ is the completion of $C$, $s = |\ol{C}\setminus C|$, and
$\VBundle^{1,0}$ is the Deligne extension of $\VLocal^{1,0}$ to $\ol{C}$.
\end{thm}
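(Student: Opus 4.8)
The statement to prove is the Kontsevich--Zorich sum formula in the form \eqref{eq:sum-lyap-formula}, relating the sum of the non-negative Lyapunov exponents of an $\RR$-VHS $\VLocal$ of weight $1$ and rank $2k$ on $C = \HH/\Gamma$ to the degree of the Deligne extension $\VBundle^{1,0}$ of the Hodge subbundle. Since the excerpt already invokes the original Kontsevich--Zorich formula and its rigorous proofs (\cite{kontsevichzorich}, \cite{EKZbig}, \cite{forni02}, \cite{bouwmoel}) as known results, my goal is not to reprove the deep ergodic-theoretic heart but to show how this packaged statement follows, keeping careful track of the geometric normalizations. The essential input is Forni's identification of the top Lyapunov exponents with an integrated curvature, which I would state as: the sum $\lambda_1 + \dots + \lambda_k$ equals the average over $\HH/\Gamma$ of the logarithmic derivative of the Hodge norm along the geodesic flow, and this average is in turn an integral of the curvature of the Hodge bundle $\VBundle^{1,0}$.

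\textbf{Key steps.} First I would recall the Oseledets setup: parallel transport along the geodesic flow on the unit tangent bundle of $C$ defines a cocycle on $\VLocal_\RR$, and the non-negative exponents $\lambda_1 \geq \dots \geq \lambda_k \geq 0$ measure the growth of the Hodge norm $\|\cdot\|$ on a generic Oseledets-filtered flag. The polarization $Q$ furnishes the symplectic structure forcing the spectral symmetry around $0$ already stated in the excerpt. Second, I would invoke the curvature formula: by Forni's second-variation computation, the sum $\lambda_1 + \dots + \lambda_k$ is obtained by integrating the trace of the second fundamental form (equivalently, the curvature of the Hodge metric on $\VBundle^{1,0}$) against the hyperbolic area measure on $\HH/\Gamma$, normalized by the total mass. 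Third, I would convert this analytic integral into the algebraic degree: the curvature of the Hodge bundle represents its first Chern class, so by Chern--Weil the integral of the curvature form computes $\deg(\VBundle^{1,0})$, where one must use precisely the Deligne extension to $\ol{C}$ so that the curvature contributions at the cusps are accounted for correctly (this is why the statement references the extension over $\ol{C}$ and the puncture count $s$). Fourth, I would check the normalization constants: the factor $2$ in the numerator and the Euler-characteristic denominator $2g(\ol{C}) - 2 + s$ both arise from the curvature normalization $K = -4$ stated in the excerpt, since $\vol(\HH/\Gamma)$ equals $2\pi(2g(\ol{C}) - 2 + s)$ under the Gauss--Bonnet relation for a hyperbolic orbifold/curve with $s$ cusps.

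\textbf{The main obstacle.} The delicate point is the treatment of the cusps and the passage from the smooth Chern--Weil integral over the open curve $C$ to the algebraic degree of the Deligne extension over the compactification $\ol{C}$. Near a puncture the Hodge metric degenerates, so the curvature integral over $C$ is improper; its finiteness and its exact value depend on the growth of the Hodge norm under the quasi-unipotent (here assumed unipotent) local monodromy, which is controlled by the asymptotics of the period map and the Schmid nilpotent-orbit theorem. Making rigorous that these boundary contributions assemble into $\deg(\VBundle^{1,0})$ rather than the degree of some naive extension is the heart of the matter, and is exactly where the choice of Deligne's canonical extension is indispensable. I expect this cusp analysis to be the step requiring the most care; the interior curvature computation and the constant-chasing are comparatively routine once the integrated-curvature formula is granted.
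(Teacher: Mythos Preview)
The paper does not prove this theorem at all. Theorem~\ref{thm:sumformula} is stated in the background section as a known result, attributed to Kontsevich--Zorich \cite{kontsevichzorich} with rigorous proofs in \cite{bouwmoel}, \cite{EKZbig}, \cite{forni02}; no argument is given beyond those citations. So there is nothing in the paper to compare your proposal against.

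That said, your sketch is a faithful outline of how the cited references actually establish the formula: Forni's variational/curvature identity expressing the partial sum of exponents as an averaged curvature of the Hodge bundle, Chern--Weil to convert the curvature integral into a degree, and the Schmid asymptotics to justify using the Deligne extension at the cusps. Your identification of the cusp analysis as the delicate step is accurate. One small correction on the normalization bookkeeping: with curvature $K=-4$ the Gauss--Bonnet relation reads $\vol(\HH/\Gamma) = \tfrac{\pi}{2}(2g(\ol{C})-2+s)$, not $2\pi(2g(\ol{C})-2+s)$; the paper itself uses this in the proof of Proposition~\ref{prop:Rank2-formula} where it writes $\deg(\omega_{\ol{B}}) = \tfrac{1}{2\pi}\cdot 4\vol(\HH/\Gamma)$. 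This does not affect the structure of your argument, only the constant-chasing you flagged as routine.
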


A generalization of this formula to higher dimensional ball quotients also
exists \cite{KapMoe12}, as well as an explicit formula for the sum of Lyapunov
exponents of the relative cohomology in case the Teichm\"uller curve 
is generated by an origami
\cite{EKZbig}.

Using Theorem \ref{thm:sumformula}, individual Lyapunov exponents have been
computed \eg for families of cyclic and abelian coverings of $\PP^1$ ramified
over $4$ points (\cite{cyclicekz}, \cite{Wright12-1}), in genus two
\cite{Bainbridge07} and for all known primitive Teichm\"uller curves in higher
genus \cite{bouwmoel}.

We recall two important properties of the Lyapunov spectrum. First, it
remains unchanged if we pass to a finite index subgroup $\Gamma'$ and consider
the Lyapunov spectrum of the pullback VHS on $\HH/\Gamma'$ (see e.g. \cite[Proposition 5.6]{KapMoe12}). Secondly, if the
VHS splits up as a direct sum, then its Lyapunov spectrum is the union of the
spectra of its pieces. 

%**********************************
\section{Lyapunov exponents of rank 2-VHS} \label{sec:proofs}
%**********************************
In this section, we derive the main theorem from Theorem \ref{thm:sumformula}.
\par
\begin{prop} \label{prop:Rank2-formula}
Let $\rho:\Gamma\to \SL_2(\RR)$ be a group homomorphism such that $\Gamma$ and 
$\Delta = \rho(\Gamma)$ are cofinite, torsionfree Fuchsian
groups, and
let $p:\HH\to \HH$ be a non-constant $\rho$-equivariant holomorphic map. Let
$\ol{p}:\HH/\Gamma \to \HH/\Delta$ be the map induced by $p$, and let $\VLocal$
be the pullback by $\ol{p}$ of the universal rank-2 $\RR$-VHS on $\HH/\Delta$. 
Then the non-negative Lyapunov exponent of $\VLocal$ is given by
\begin{align} \label{eqn:Lyap-with vol}
\lambda = \frac{\deg(\ol{p})\vol(\HH/\Delta)}{\vol(\HH/\Gamma)}.
\end{align}
\end{prop}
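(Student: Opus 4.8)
The plan is to express the Hodge bundle $\VBundle^{1,0}$ as a pullback of a line bundle along $\ol p$, compute its degree, and feed the result into Theorem~\ref{thm:sumformula} with $k=1$, converting the quantity $2g-2+s$ into hyperbolic area by Gauss--Bonnet. Throughout, write $\ol C = \ol{\HH/\Gamma}$ and $\ol B = \ol{\HH/\Delta}$ for the smooth compactifications, $S = \ol C\setminus(\HH/\Gamma)$ and $S' = \ol B\setminus(\HH/\Delta)$ for their cusp sets, and $s = |S|$, $s' = |S'|$. First I would check that $\ol p$ extends to a finite holomorphic morphism $\ol p\colon\ol C\to\ol B$: near a cusp of $\Gamma$, after conjugating the corresponding parabolic to $z\mapsto z+1$ and passing to the coordinate $q = e^{2\pi i z}$, the map $\ol p$ is a holomorphic map from a punctured disk into the Riemann surface $\HH/\Delta\subset\ol B$, which extends over the puncture by the removable singularity theorem for maps into a compact Riemann surface; as $p$ is non-constant, $\ol p$ is finite of a well-defined degree $d := \deg(\ol p)\geq 1$.

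The key step, which I expect to be the main obstacle, is the analysis at the cusps that yields $\deg(\VBundle^{1,0}) = d\cdot\deg(\LBundle^{1,0})$, where $\LLocal$ denotes the universal (hence uniformizing) rank-$2$ VHS on $\HH/\Delta$ and $\LBundle^{1,0}$ the Deligne extension of $\LLocal^{1,0}$. Let $c$ be a cusp of $\Gamma$ and $e_c$ the ramification index of $\ol p$ at $c$. If $\ol p(c)\in\HH/\Delta$, a small loop about $c$ maps to a nullhomotopic loop, so the local monodromy of $\VLocal = \ol p^*\LLocal$ about $c$ is trivial; if $\ol p(c)\in S'$, that local monodromy is the $e_c$-th power of a local monodromy of $\LLocal$ about the cusp $\ol p(c)$, hence a power of a parabolic element of the torsionfree Fuchsian group $\Delta$. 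In either case the local monodromy of $\VLocal$ about $c$ is unipotent, so $\VLocal$ admits a Deligne extension; and since all local monodromies occurring are unipotent, the Deligne extension is compatible with pullback along $\ol p$, which gives $\VBundle^{1,0}\isom\ol p^*\LBundle^{1,0}$ as line bundles on $\ol C$. (Were some local monodromy only quasi-unipotent, this identification would fail up to a correction divisor of positive degree supported at the ramified cusps; the torsionfreeness of $\Delta$ is precisely what rules this out.) Taking degrees of line bundles on the smooth projective curve $\ol C$ then gives $\deg(\VBundle^{1,0}) = d\cdot\deg(\LBundle^{1,0})$.

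It remains to assemble the pieces. Since $\LLocal$ is uniformizing, the background material gives $(\LBundle^{1,0})^{\tensor 2}\isom\Omega^1_{\ol B}(\log S')$, so $2\deg(\LBundle^{1,0}) = 2g(\ol B) - 2 + s'$. Substituting this and $\deg(\VBundle^{1,0}) = d\cdot\deg(\LBundle^{1,0})$ into Theorem~\ref{thm:sumformula}, applied to the rank-$2$ VHS $\VLocal$ on $C = \HH/\Gamma$ (so $k=1$):
\[
\lambda = \frac{2\deg(\VBundle^{1,0})}{2g(\ol C) - 2 + s} = \frac{2d\deg(\LBundle^{1,0})}{2g(\ol C) - 2 + s} = \frac{d\,\bigl(2g(\ol B) - 2 + s'\bigr)}{2g(\ol C) - 2 + s}.
\]
Finally, by Gauss--Bonnet one has $\vol(\HH/G) = \kappa\cdot\bigl(2g(\ol{\HH/G}) - 2 + s_G\bigr)$ for every cofinite torsionfree Fuchsian group $G$, with $s_G$ the number of cusps and $\kappa>0$ a constant depending only on the curvature normalization; applying this to $\Gamma$ and to $\Delta$ and cancelling $\kappa$ turns the last display into $\lambda = \deg(\ol p)\,\vol(\HH/\Delta)/\vol(\HH/\Gamma)$, which is \eqref{eqn:Lyap-with vol}. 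Summarizing, the one genuinely delicate point is the cusp analysis — extending $\ol p$ and, above all, identifying the Deligne extension of the pullback VHS with the pullback of the Deligne extension — and the unipotence of the local monodromies, forced by $\Delta$ being torsionfree Fuchsian, is exactly the hypothesis that makes it work.
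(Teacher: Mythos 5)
Your proof is correct and follows essentially the same route as the paper: pull back the uniformizing (universal) rank-$2$ VHS, identify $\VBundle^{1,0}$ with $\ol{p}^*$ of the Deligne extension of the universal Hodge bundle, take degrees, and convert $2g-2+s$ into hyperbolic volume via Gauss--Bonnet. The only difference is that you spell out the cusp analysis (unipotence of local monodromies and compatibility of the Deligne extension with pullback) that the paper asserts in one line, which is a welcome addition rather than a deviation.
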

\begin{proof}
 By Theorem \ref{thm:sumformula}, the Lyapunov exponent is given by
\[\lambda = \frac{2\deg(\VBundle^{1,0})}{\deg(\omega_{\ol{B}})},\]
where $\ol{B}$ is the completion of $\HH/\Gamma$ and where $\VBundle^{1,0}$ is
the Deligne extension to $\ol{B}$ of the $(1,0)$-part of $\VLocal$. Further,
\[\deg(\omega_{\ol{B}}) = -\chi(\ol{B}) = \tfrac{1}{2\pi}
4\vol(\HH/\Gamma),\]
by the Gau\ss-Bonnet formula (where we take the curvature on $\HH$ to be normalized to $-4$).
Let $\ol{C}$ be the completion of $\HH/\Delta$,
and let $\ULocal$ be the universal VHS on $\HH/\Delta$, whose Deligne extension
of the $(1,0)$-part we denote by $\UBundle^{1,0}$. By universality and
the Gau\ss-Bonnet formula,
 we have
\[2\deg(\UBundle^{1,0}) = \deg(\omega_{\ol{C}}) =
\tfrac{1}{2\pi}4\vol(\HH/\Delta),\]
and since $\ol{p}^*\UBundle^{1,0} = \VBundle^{1,0}$, the claim follows.
\end{proof}
\par
We remark that Proposition \ref{prop:Rank2-formula} is also readily deduced from a reformulation of the Kontsevich-Zorich formula by Wright \cite[Theorem 1.2]{Wright12-1}.
\par
For our applications, we need to allow groups that contain torsion elements or
whose action on $\HH$ has a (usually finite) kernel. In this situation there
might not be a VHS on the quotient, but only on an appropriate finite cover.
 (Note that by a theorem of Selberg, any finitely generated subgroup
of a matrix group always has a torsionfree subgroup of finite
index.)
However, we still can compute the right-hand side 
of \eqref{eqn:Lyap-with vol}. The next lemma shows that 
this quantity is independent under passing to a
finite index subgroup.
\par
\begin{lem} \label{lem:going up}
 Let $\Gamma$ be a group acting cofinitely and holomorphically on $\HH$. Let
$\rho:\Gamma\to \SL_2(\RR)$ be a group homomorphism such that $\Delta =
\rho(\Gamma)$ is a cofinite Fuchsian
group, and let $p:\HH\to \HH$ be a non-constant $\rho$-equivariant holomorphic
map. Let $\Gamma'\subgp \Gamma$ be a finite index subgroup. Then $\Delta'=
\rho(\Gamma')$ has finite index and
\[\frac{\deg(\ol{p})\vol(\HH/\Delta)}{\vol(\HH/\Gamma)} =
\frac{\deg(\ol{p}')\vol(\HH/\Delta')}{\vol(\HH/\Gamma')},\]
where $\ol{p}:\HH/\Gamma\to \HH/\Delta$ and $\ol{p}':\HH/\Gamma' \to
\HH/\Delta'$ are the maps induced by $p$.
\end{lem}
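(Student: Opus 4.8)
The strategy is to verify the invariance of the quantity $\frac{\deg(\ol p)\vol(\HH/\Delta)}{\vol(\HH/\Gamma)}$ by tracking each of the three factors separately under the passage $\Gamma \rightsquigarrow \Gamma'$. First I would note that $\Gamma' \subgp \Gamma$ of finite index forces $\Delta' = \rho(\Gamma')$ to have finite index in $\Delta$, since $[\Delta:\Delta'] \leq [\Gamma:\Gamma']$ (the map $\Gamma/\Gamma' \to \Delta/\Delta'$ induced by $\rho$ is surjective on cosets). In particular $\Delta'$ is again a cofinite Fuchsian group, so all the objects on the right-hand side make sense. The volume factors behave multiplicatively: $\vol(\HH/\Gamma') = [\Gamma:\Gamma']\cdot\vol(\HH/\Gamma)$, and likewise $\vol(\HH/\Delta') = [\Delta:\Delta']\cdot \vol(\HH/\Delta)$. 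Here I should be a little careful about what $\vol(\HH/\Gamma)$ means when $\Gamma$ acts with fixed points or with a kernel; one takes the orbifold volume, \ie the volume of a fundamental domain, which is still multiplicative in the index (the kernel contributes a factor that is constant along the tower, so it cancels, and the ramification of the orbifold cover is accounted for correctly by the fundamental-domain interpretation).

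The remaining point is the behaviour of $\deg(\ol p)$. I would use the commutative square
\[
\begin{CD}
\HH/\Gamma' @>{\ol p'}>> \HH/\Delta' \\
@V{\pi_\Gamma}VV @VV{\pi_\Delta}V \\
\HH/\Gamma @>{\ol p}>> \HH/\Delta,
\end{CD}
\]
where $\pi_\Gamma$ and $\pi_\Delta$ are the natural (orbifold) covering maps, both induced by the identity on $\HH$. This square commutes because all four maps descend from maps on $\HH$ (namely $\id$ and $p$), and it is in fact Cartesian up to the kernels. Multiplicativity of degree along the two routes from the upper-left to the lower-right corner gives
\[
\deg(\ol p')\cdot \deg(\pi_\Delta) = \deg(\pi_\Gamma)\cdot\deg(\ol p).
\]
Now $\deg(\pi_\Gamma) = [\Gamma:\Gamma']$ and $\deg(\pi_\Delta) = [\Delta:\Delta']$ (again in the orbifold sense, the kernel cancelling between numerator and denominator). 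Substituting the three computed factors into the right-hand side of the claimed identity and cancelling $[\Gamma:\Gamma']$ against itself and $[\Delta:\Delta']$ against itself yields exactly $\frac{\deg(\ol p)\vol(\HH/\Delta)}{\vol(\HH/\Gamma)}$, which is the assertion.

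**Main obstacle.** The genuinely delicate part is bookkeeping the orbifold/stack subtleties: when $\Gamma$ has torsion or a nontrivial kernel $N = \ker\rho$, the quotients $\HH/\Gamma$ and $\HH/\Delta$ are not honest Riemann surfaces, $\ol p$ is not literally a map of curves, and "$\deg(\ol p)$", "$\vol$" must be interpreted consistently (as orbifold degree and orbifold volume, or equivalently after passing to a common torsionfree finite-index refinement guaranteed by Selberg). One has to check that with these interpretations the square above still commutes and remains degree-multiplicative, and that the index of $\rho(\Gamma')$ in $\rho(\Gamma)$ really equals the ratio $[\Gamma:\Gamma']/[\,\ker\rho : \ker\rho\cap\Gamma'\,]$ in a way that makes the kernel contributions cancel on both sides. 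The cleanest way to sidestep this is: pick a torsionfree $\Gamma'' \subgp \Gamma'$ of finite index on which $\rho$ is injective (Selberg plus intersecting with $\ker\rho$'s complement is not possible in general, but one can at least pass to torsionfree $\Gamma''$; injectivity of $\rho$ need not hold, so instead one reduces to Proposition~\ref{prop:Rank2-formula}'s hypotheses only after also passing modulo the kernel), prove the identity for the inclusion $\Gamma'' \subgp \Gamma$ and for $\Gamma'' \subgp \Gamma'$ using the honest-curve case, and then divide. This reduces everything to the torsionfree, kernel-free situation where $\deg$, $\vol$, and index are all unambiguous and the multiplicativity statements are standard.
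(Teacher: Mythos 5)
Your proposal is correct and follows essentially the same route as the paper: the paper's proof consists precisely of the observation $(\Delta:\Delta') = (\Gamma:\rho^{-1}(\Delta')) \leq (\Gamma:\Gamma')$ followed by comparing degrees around the same commutative square $\HH/\Gamma' \to \HH/\Delta'$, $\HH/\Gamma \to \HH/\Delta$. Your additional care about orbifold volumes, kernels and torsionfree refinements is sound but is left implicit in the paper's one-line argument.
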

\begin{proof}
We have $(\Delta:\Delta') = (\Gamma:\rho^{-1}(\Delta')) \leq
(\Gamma:\Gamma')$. The second claim follows by comparing the degrees of maps in
the commutative diagram
\[\begin{CD}
 \HH/\Gamma' @>{\ol{p}'}>> \HH/\Delta'\\
 @VVV	@VVV \\
 \HH/\Gamma @>>{\ol{p}}> \HH/\Delta
\end{CD}\]
%  \begin{diagram}[h=2em,w=3em,nohug]
%   \HH/\Gamma' & \rTo^{\ol{p}'} & \HH/\Delta'\\
%   \dTo       &        & \dTo\\
%   \HH/\Gamma & \rTo_{\ol{p}} & \HH/\Delta
%  \end{diagram}
\end{proof}
\par

%**********************************
\subsection{Computing the degree  of $\overline{p}$}
%**********************************
In this section, we show that in the presence of cusps, the quantities on the
right hand side of \eqref{eqn:Lyap-with vol} are explicitly computable only from
the group homomorphism $\rho$.
\par
Throughout, let $\rho:\Gamma \to \Delta$ be a
homomorphism between non-cocompact, cofinite Fuchsian groups, let $p:\HH\to \HH$
be a $\rho$-equivariant non-constant holomorphic map, and let $\ol{p}:\HH/\Gamma
\to \HH/\Delta$ be the map induced by $p$. Denote the extension
$\ol{p}:\ol{B}\to \ol{C}$ to the completions $\ol{B}$ of $\HH/\Gamma$ and
$\ol{C}$ of $\HH/\Delta$ by the same letter.
\par
In the following, a cusp will, depending on the context, 
be a point in $\partial \HH$, stabilized by a parabolic in $\Gamma$ or its equivalence class under the
action of $\Gamma$, respectively the 
point in the completion of $\ol{B}$ corresponding to this class.
\par 
\begin{lem}\label{lem:compute-degree}
 Let $\Gamma_0 \subgp \Gamma$, respectively $\Delta_0\subgp \Delta$ be
maximal parabolic subgroups associated with cusps $b\in \ol{B}$, respectively $c\in
\ol{C}$. Let $\gamma$ be a generator of $\Gamma_0$ such that $\rho(\gamma)$ is
parabolic and lies in $\Delta_0$. Then
\begin{enumerate}[a)]
 \item $\ol{p}$ maps $b$ to $c$. 
 \item The ramification index $e(\ol{p},b)$ of $\ol{p}$ at $b$ is
$(\Delta_0:\rho(\Gamma_0))$.
 \item We have $\deg(\ol{p}) = \sum_{b\in \ol{p}^{-1}(c)} e(\ol{p},b)$.
\end{enumerate}
\end{lem}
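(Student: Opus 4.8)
The plan is to analyze $\ol{p}$ in local coordinates near the cusps, using the standard uniformization of a cusp neighborhood by a punctured disk. First I would fix the cusp $c \in \ol{C}$ and an element $\sigma \in \SL_2(\RR)$ conjugating the parabolic subgroup $\Delta_0$ to the standard parabolic group generated by $z \mapsto z + h_c$ for the appropriate width $h_c$; similarly conjugate $\Gamma_0$ to the group generated by $z \mapsto z + h_b$. After replacing $p$ by $\sigma_C \circ p \circ \sigma_B^{-1}$, we may assume $\Gamma_0 = \langle z \mapsto z + h_b\rangle$, $\Delta_0 = \langle z \mapsto z + h_c \rangle$, and that $\gamma$ acts as $z \mapsto z + h_b$. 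The $\rho$-equivariance of $p$ then forces $p(z + h_b) = p(z) + k h_c$ where $k = (\Delta_0 : \rho(\Gamma_0))$, since $\rho(\gamma)$ is a parabolic element of $\Delta_0$ and $\Delta_0/\langle\pm I\rangle$ is infinite cyclic generated by the class of $z\mapsto z+h_c$. The exponential maps $q_b = e^{2\pi i z / h_b}$ and $q_c = e^{2\pi i z/h_c}$ identify punctured neighborhoods of $b$ and $c$ with punctured disks, and in these coordinates $p$ descends to a holomorphic map $\ol{p}$ with $q_c \circ \ol{p} = q_b^{\,k} \cdot (\text{unit})$, provided we first check that $\ol{p}$ extends holomorphically across the puncture.

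For part (a), I would argue that $\Im p(z) \to \infty$ as $\Im z \to \infty$: the equivariance relation $p(z+h_b) = p(z) + kh_c$ shows that $z \mapsto p(z) - (k h_c/h_b) z$ is $h_b$-periodic and holomorphic on a half-plane, hence extends to a holomorphic function of $q_b$ near $q_b = 0$; combined with the fact that $p$ maps into $\HH$, a standard argument (the removable-singularity / big-Picard type estimate used for period maps near cusps of unipotent monodromy) shows the $q_b$-expansion has no pole, so $\Im p(z)$ grows at most linearly and $\ol{p}(b)$ lies in $\ol{C}$; and since $\rho(\Gamma_0)$ is parabolic, $\ol{p}(b)$ is fixed by a parabolic in $\Delta$, hence is the cusp $c$. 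Here $k \ge 1$ because $\rho(\gamma)$ is parabolic, in particular nontrivial modulo $\pm I$, which forces $k \neq 0$; and $k$ is a positive integer because it is the index $(\Delta_0 : \rho(\Gamma_0))$ of one infinite cyclic group in another (modulo $\pm I$). For part (b), with the local form $q_c \circ \ol{p} = q_b^k \cdot u(q_b)$ and $u(0) \neq 0$, the ramification index of $\ol{p}$ at $b$ is exactly $k = (\Delta_0 : \rho(\Gamma_0))$, which is the definition of $e(\ol{p}, b)$. Part (c) is simply the standard fact that for a nonconstant holomorphic map between compact Riemann surfaces, the degree equals the sum of ramification indices over any fiber; I would cite this rather than prove it.

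The main obstacle is part (a): showing that $\ol{p}$ extends \emph{holomorphically} (not just continuously) to the cusp $b$, i.e. that the $q_b$-expansion of $p(z) - (kh_c/h_b)z$ has no pole at $q_b = 0$. The cheap way is to invoke the Schwarz–Pick lemma: $p : \HH \to \HH$ is distance-nonincreasing for the hyperbolic metric, and a neighborhood of the cusp $b$ in $\HH/\Gamma$ has finite "hyperbolic area toward the cusp" in a way that bounds how fast $\Im p(z)$ can grow; more concretely, on the region $\{\Im z > T\}$, which embeds in $\HH/\Gamma_0$, the hyperbolic distance from $ih_b + z$ to the boundary circle controls $p$, giving $\Im p(z) = O(\Im z)$ and hence a removable singularity in $q_b$. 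I would present this estimate carefully, as it is the one genuinely analytic input; everything else is bookkeeping with parabolic subgroups and indices. Once $\ol{p}$ is holomorphic near $b$ with a zero of order $k$ in the local disk coordinates after the cusp uniformization, parts (b) and (c) are immediate.
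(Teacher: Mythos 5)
Your proof is correct and follows essentially the same route as the paper: both pass to the standard punctured-disk uniformization of the cusp neighborhoods and identify the ramification index with the index $(\Delta_0:\rho(\Gamma_0))$ of parabolic subgroups, citing the standard degree formula for (c). The only differences are cosmetic: for (a) the paper obtains $\Im p(z)\to\infty$ more quickly from the Schwarz lemma applied to translation lengths, $d_{\hyp}(a_n,a_n+1)\geq d_{\hyp}(p(a_n),p(a_n)+\lambda)$ along $a_n=in$, so it does not need your linear-growth/removable-singularity estimate for the periodic part of $p$; and for (b) it reads off $k$ from the induced map on fundamental groups of the punctured neighborhoods rather than from the explicit local form $q_c\circ\ol{p}=q_b^{\,k}\cdot(\text{unit})$.
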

\begin{proof}
 Let $s$, respectively $t \in \RR\cup\{\infty\}$ be the fixed point of $\Gamma_0$,
respectively $\Delta_0$. Without loss of generality, we may assume $s = t = \infty$, and that $\Gamma_0$
respectively $\Delta_0$ is generated by $(z\mapsto z+1)$. The canonical projections
$u_\Gamma: \HH \to \HH/\Gamma$ respectively $u_\Delta:\HH\to\HH/\Delta$ factor over
$\HH\to \HH/\Gamma_0$ respectively $\HH\to\HH/\Delta_0$, and both $\HH/\Gamma_0$ and
are $\HH/\Delta_0$ isomorphic to $\DD^*$ via the map induced by $z\mapsto
\exp(2\pi i z)$. Under this isomorphism, the image of $s$, respectively $t$
is identified with $0 \in \DD$. Being equivariant, the map $p$  descends to
$p_0:\HH/\Gamma_0\isom \DD^* \to \DD^*\isom \HH/\Delta_0 $. 
\par
To prove a), it suffices to show that for a
sequence in $\DD^*$ converging to $0$, the image under $p_0$ converges to $0$.
Define $a_n =
in$ and let $b_n = \exp(2\pi ia_n)$ in $\DD^*$; we have
$b_n\to 0$. By the Schwarz lemma, $p$ does not increase hyperbolic distances,
thus
\[d_{\hyp}(a_n,a_n+1) \geq d_{\hyp}(p(a_n),p(a_n)+\lambda),\]
where $z\mapsto z+\lambda$, ($\lambda\in \ZZ\setminus \{0\}$) generates
$\rho(\Gamma_0)$. Since
$d_{\hyp}(a_n,a_n+1) \to 0$ as $n\to\infty$, we also have
$d_{\hyp}(p(a_n),p(a_n)+\lambda) \to 0$, whence $\Im(p(a_n)) \to \infty$, which
means
that $p_0(b_n) \to 0$.
\par 
b) A basis of punctured neighborhoods of $b\in \ol{B}$ is given by the images of horoballs
$U_R = \set{z\in\HH}{\Im(z)>R}$ under the projection modulo $\Gamma$. If we choose 
$R$ big enough, then we can ensure that $U_R$ is stabilized only by elements 
of $\Gamma_0$, whence the quotient $U_R/\Gamma_0$ embeds into $\HH/\Gamma$, and gives rise to a 
chart $U_R/\Gamma_0\to \DD^*$. In the same way, we can obtain a chart $U_{R'}/\Delta_0\to \DD^*$
 such that in these charts, $\ol{p}$ takes the form $z\mapsto z^k$ with $k$ being the ramification
index. Thus the induced map $\ol{p}_*$ on fundamental
groups maps a generator of $\pi_1(U_R/\Gamma_0)\isom \Gamma_0$ to the $k$-th power of a generator of
$\pi_1(U_{R'}/\Delta_0) \isom \Delta_0$. This group homomorphism $\Gamma_0\to \Delta_0$ 
must be equal to $\rho$, since for both $p$ is equivariant. It follows that 
 $k = (\Delta_0:\rho(\Gamma_0))$.
\end{proof}
\par
Note that the degree of $\ol{p}$ can be $1$ without $p$ being an isomorphism. However, this can happen only
when the Fuchsian groups contain torsion elements.
\par
\begin{proof}[Proof of Theorem~\ref{main thm}]
The Lyapunov spectrum does not change, if we pass to a finite index subgroup $\Gamma'$ of $\Gamma$. Thus if $\rho_V(\Gamma)$ is finite, then $\rho_V(\Gamma')$ will be trivial for the finite index subgroup $\Gamma' = \Kern(\rho_V)$, and therefore $\lambda = 0$.
\par
We are left with the case when $\rho_V(\Gamma)$ is infinite. By Deligne's semisimplicity theorem, the local system $\VLocal$ associated with $V$ carries a VHS. We let $p_V$ be its period map. $p_V$ cannot be constant, for otherwise every $g\in \rho_V(\Gamma)$ would stabilize $p(z) \equiv \text{const} \in \HH$, but this stabilizer is finite since $\rho_V(\Gamma)$ is discrete. Thus we obtain a non-constant holomorphic map $p:\HH \to \HH$ that descends to $\ol{p}:\HH/\Gamma \to \HH/\rho_V(\Gamma)$. On the left-hand side, we have a Riemann surface of finite type. We claim that $\ol{p}$ can be extended continuously to the compactification $\ol{B}$ of $\HH/\Gamma$, respectively the possibly only partial compactification $\ol{C}$ of $\HH/\rho_V(\Gamma)$, where $\ol{B}$ and $\ol{C}$ are obtained by adjoining all cusps. From this we conclude that $\ol{C}$ is compact and thus $\HH/\rho_V(\Gamma)$ has finite volume.
\par
To prove the claim, let $b\in \partial \HH$ be a cusp of $\Gamma$ and let $\gamma$ be a generator of its stabilizer. By the Schwarz lemma, it follows that
\[d_{\HH}(z,\gamma(z)) \geq d_{\HH}(p(z),p(\gamma(z)) \geq \ell(\rho(\gamma))\]
where $\ell(g) = \inf_{z\in \HH} d_{\HH}(z,g(z))$ is the translation length. Since the left-hand side goes to $0$ as $z$ approaches the cusp, $\ell(\rho(\gamma)) =0$, whence $\rho(\gamma)$ is either parabolic or elliptic. In the first case, the proof of Lemma \ref{lem:compute-degree} a) shows that $\ol{p}$ is locally given as a holomorphic map $\DD^*\to\DD^*$, which has a canonical extension to $\DD\to \DD$. This is true also for the second case with the difference that $\ol{p}(b)$ is now a point in $\HH/\rho_V(\Gamma)$.
\par
The statement of Theorem~\ref{main thm} now follows from Proposition~\ref{prop:Rank2-formula} together with Lemma~\ref{lem:compute-degree}.
\end{proof}
\par
% steht schon oben
% \begin{rem}
% In the above proof, we showed that the monodromy transformation $\rho(\gamma)$ 
% about a cusp is always quasi-unipotent. In the context of VHS, 
% this is a theorem originally due to Borel (see e.g. \cite[Lemma 4.5]{schmid73}).
% \end{rem}

%**********************************
\section{Applications}\label{sec:examples}
%**********************************
In this section, we describe how to algorithmically obtain the monodromy representation in the case of origamis 
in terms of the action of generators of the affine group. Then we exhibit two principles to split up this representation
into subrepresentations. As an application, we present two examples where a splitting of the monodromy representation of a Teichmüller 
curve into rank $2$-subrepresentations is found. We then use the technique from the previous 
section to determine the Lyapunov spectrum. 
\par
\subsection{Algorithmic approach}
Given an origami $p: \Ori \to E$, we outline an algorithm for obtaining the monodromy representation of $\Aff(\Ori)$ in terms of its generators.  It has been realized mainly by Myriam Finster, building on work of Gabriela Weitze-Schmith\"usen, Karsten Kremer and others.
\par
To fix notations, let $E^*$ be $E$ minus the ramification point $e$ of $p$, and let $\Ori^* = \Ori\setminus p^{-1}(e)$. Then $p:\Ori^*\to E^*$ is a topological covering. We fix an isomorphism $\pi_1(E^*)\isom F_2$ by choosing the basis $x$,$y$ of $\pi_1(E^*)$ represented by a horizontal and a vertical path in $E^*$. The preimage of $x\cup y$ under $p$ is a $4$-valent graph $\Graph(\Ori^*)$ homotopy-equivalent to $\Ori^*$. Moreover, $\pi_1(\Ori^*)$ injects into $\pi_1(E^*)$; let $p_*$ be this injection and let its image be denoted by $H = H(\Ori^*)$.
\par
We make use of a proposition, which is already implicit in \cite{gabialgo}. Let $c: F_2 \to \Aut^+(F_2)$ denote the canonical inclusion of the inner automorphisms of $F_2$ into its orientation-preserving automorphisms and let $\beta:\Aut^+(F_2) \to \SL_2(\ZZ) \isom \Out^+(F_2)$ denote the canonical projection.
\begin{prop} \label{prop:affine-group-and-AutF2}
Let $p:\Ori\to E$ be an origami, and let $H= H(\Ori^*)$. There is a commutative diagram with exact rows
\begin{diagram}[height=4ex]
%  1 &\rTo &N(H) &\rTo^c   &\Stab^+(H) &\rTo^\beta &\SL(\Ori^*) &\rTo &1\\
%    &     &\dTo_\psi   &       &\dTo_\phi            &     &||    &\\
 1 &\rTo &N(H)/H &\rTo^c   &\Stab^+(H)/c(H) &\rTo^\beta &\SL(\Ori^*) &\rTo &1\\
   &     &\uTo_\isom   &       &\uTo_\isom^\psi           &     &||    &\\
 1 &\rTo &\Trans(\Ori^*) &\rTo &\Aff(\Ori^*)    &\rTo^\der &\SL(\Ori^*) &\rTo &1
\end{diagram}
where $\Stab^+(H)$ is the subset of $f\in \Aut^+(F_2)$ such that $f(H) = H$, and $N(H)$ is the normalizer of $H$ in $F_2$.
\par
Moreover, the injection $p_*$ is equivariant for the actions by outer automorphisms of $f\in \Aff(\Ori^*)$ on $\pi_1(\Ori^*)$ and of $\psi(f)\cdot c(H)\in \Stab^+(H)/c(H)$ on $H$.
\end{prop}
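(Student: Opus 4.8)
The plan is to construct each of the three vertical maps in the diagram and then verify commutativity and exactness square by square. The key players are all subgroups of, or quotients related to, $\Aut^+(F_2)$, so I would work throughout inside $\Aut^+(F_2)$ using the identification $\pi_1(E^*)\isom F_2$ fixed by the basis $x,y$, and the identification $\SL_2(\ZZ)\isom\Out^+(F_2)$ coming from the action on homology $F_2^{\mathrm{ab}}\isom\ZZ^2$.

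First I would recall the topological dictionary: since $p:\Ori^*\to E^*$ is a covering with $H = p_*\pi_1(\Ori^*)\subgp F_2$, an orientation-preserving self-homeomorphism $f$ of $\Ori^*$ (up to isotopy) that is a lift of a self-homeomorphism of $E^*$ corresponds, after choosing basepoints and lifts, to an automorphism $\tilde f\in\Aut^+(F_2)$ with $\tilde f(H)=H$, i.e.\ to an element of $\Stab^+(H)$; changing the choice of lift / path changes $\tilde f$ by an inner automorphism $c(h)$ with $h\in H$, so the class of $\tilde f$ in $\Stab^+(H)/c(H)$ is well defined. This is the definition of $\psi:\Aff(\Ori^*)\to\Stab^+(H)/c(H)$; that $f\mapsto\tilde f$ is a homomorphism and that $\psi$ is injective (an affine homeomorphism inducing an inner automorphism of $\pi_1(\Ori^*)$ is isotopic to the identity, because $\Ori^*$ is a $K(\pi,1)$ with free $\pi$) is where I would invoke, or reprove, the standard fact already cited from \cite{gabialgo}. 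Surjectivity onto the preimage of $\SL(\Ori^*)$ under $\beta$: given $g\in\Stab^+(H)$, the outer action of $g$ on $H\isom\pi_1(\Ori^*)$ is realized by a homeomorphism of $\Ori^*$ by Dehn--Nielsen--Baer for surfaces with punctures, and one checks this homeomorphism is affine with derivative $\beta(g)$.

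Next, the maps on the outer terms. The map $\Trans(\Ori^*)\to N(H)/H$: a translation is a deck transformation of $p:\Ori^*\to E^*$, hence corresponds to an element of $N(H)/H$ (the deck group of the cover), and conversely; this is the classical identification of the deck group of a non-normal cover, and it is manifestly an isomorphism. The middle-right square commutes by definition of $\der$ and $\beta$: both send $f$ (resp.\ $\tilde f$) to its action on homology, which is the Veech-group element; restricting to kernels, the image of a translation in $\Stab^+(H)/c(H)$ is, by the construction of $\psi$, exactly the class of the corresponding element of $N(H)$ modulo $c(H)$, and $c$ induces the stated map $N(H)/H\to\Stab^+(H)/c(H)$ because $c(H)$ is precisely the image of the inner automorphisms by $H$. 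Exactness of the top row: $\beta$ restricted to $\Stab^+(H)/c(H)$ has image $\SL(\Ori^*)$ by the surjectivity argument above, and its kernel consists of classes of automorphisms of $F_2$ acting trivially on homology and stabilizing $H$, i.e.\ by a lemma on $\Aut^+(F_2)$ (an orientation-preserving automorphism acting trivially on $F_2^{\mathrm{ab}}$ is inner --- this is where the rank-$2$ freeness is essential), of the form $c(w)$ with $w\in N(H)$; modulo $c(H)$ this is exactly $c(N(H)/H)$. Injectivity of $c:N(H)/H\to\Stab^+(H)/c(H)$ is clear since $c$ is injective on $F_2$ and $c^{-1}(c(H))=H$.

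Finally, the equivariance statement: for $f\in\Aff(\Ori^*)$, by construction $\psi(f)$ is represented by an automorphism $\tilde f\in\Stab^+(H)$ whose restriction to $H$ is, under $p_*$, the automorphism $f_*$ of $\pi_1(\Ori^*)$ induced by $f$ (up to the inner ambiguity); hence $p_*\circ f_* = \tilde f|_H\circ p_*$, which is precisely equivariance of $p_*$ for the outer actions of $f$ on $\pi_1(\Ori^*)$ and of $\psi(f)c(H)$ on $H$. I expect the main obstacle to be the careful bookkeeping of basepoints and lifts that makes $\psi$ well defined and a homomorphism on the nose (rather than just up to conjugacy), together with pinning down exactly which inner automorphisms --- those by $H$, not all of $N(H)$ --- one must quotient by at each spot so that all three squares commute simultaneously; the underlying group theory (the two $\Aut^+(F_2)$-lemmas and Dehn--Nielsen--Baer) is standard.
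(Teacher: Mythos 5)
Your overall strategy matches the paper's: both arguments reduce to the standard picture of $\Aut^+(F_2)$ acting on the once-punctured torus (the diagram from \cite{gabialgo}), identify $\Trans(\Ori^*)$ with the deck group $N(H)/H$ of the covering $p$, use the Nielsen lemma $\mathrm{IA}(F_2)=\mathrm{Inn}(F_2)$ together with $c(F_2)\cap\Stab^+(H)=c(N(H))$ for exactness in the middle, and extract the equivariance of $p_*$ directly from the construction of $\psi$. The bookkeeping differs: the paper introduces the intermediate group $\Aff_u(\tilde X)$ of affine automorphisms of the universal cover $\tilde X\to\Ori^*$ that descend to $\Ori^*$, so the whole diagram is obtained from the \cite{gabialgo} diagram for $\tilde X\to E^*$ by restricting to $\Stab^+(H)$ and quotienting by $\Gal(\tilde X/\Ori^*)\isom H$, whereas you work with lifts from $E^*$ to $\Ori^*$ directly. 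Either packaging is fine in principle.

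The one step where your argument as written does not go through is the surjectivity of $\beta$ restricted to $\Stab^+(H)/c(H)$ onto $\SL(\Ori^*)$ (equivalently, exactness of the top row on the right, and surjectivity of $\psi$ onto the preimage). Dehn--Nielsen--Baer hands you \emph{some} homeomorphism of $\Ori^*$ realizing the outer action of $g\in\Stab^+(H)$, but there is no reason that homeomorphism is affine, and ``one checks this homeomorphism is affine with derivative $\beta(g)$'' is precisely the assertion that $\beta(g)$ lies in the Veech group --- i.e.\ the statement being proved --- so as written the step is circular. The missing mechanism is descent through translation coverings: under the isomorphism $\Aff(\tilde X)\isom\Aut^+(F_2)$, the element $g$ corresponds to an \emph{affine} automorphism of the universal translation cover $\tilde X$ (equivalently, of $E^*$ up to the deck group), and the hypothesis $g(H)=H$ is exactly what allows this affine map to descend to $\Ori^*=\tilde X/H$; the descended map is automatically affine with derivative $\beta(g)$ because all the coverings involved are translation coverings. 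This is what the paper's introduction of $\Aff_u(\tilde X)$ and its appeal to \cite{gabialgo} accomplish. With that substitution, the remaining pieces of your proof (injectivity of $\psi$ from $\Ori^*$ being an aspherical surface with free fundamental group, the deck-group identification of $\Trans(\Ori^*)$, the Nielsen lemma for $\ker\beta$, and the equivariance of $p_*$) line up with the paper's.
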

\par
Note that in general $\Aff(\Ori) \supsetneq \Aff(\Ori^*)$ if $\Ori$ is not a primitive origami.
 Also $\Trans(\Ori^*) = \Trans(\Ori)$ only holds if $g(\Ori) \geq 2$.
\par
\begin{proof}
 Let $u:\tilde X\to \Ori^*$ denote a fixed universal covering, and endow it with the translation structure obtained by pullback. Then $p\circ u:\tilde X \to E^*$ is a universal covering of $E^*$. Let $\Gal(\tilde X/E^*)$ denote the deck transformations of $p\circ u$. By \cite{gabialgo}, there is a commutative diagram with exact rows
\begin{gather}\label{diag:Gabi-F2-AutF2-Veech}
\begin{diagram}[height=4ex]
 1 &\rTo &F_2 &\rTo^c   &\Aut^+(F_2) &\rTo^\beta &\SL_2(\ZZ) &\rTo &1\\
   &     &\uTo_\isom   &       &\uTo_\isom            &     &||    &\\
 1 &\rTo &\Gal(\tilde X/ E^*) &\rTo &\Aff(\tilde X)    &\rTo^\der &\SL_2(\ZZ) &\rTo &1
\end{diagram}
\end{gather}
where the isomorphism $\Aff(\tilde X) \to \Aut^+(F_2)$ stems from the fact that each affine $f:\tilde X\to \tilde X$ descends to $E^*$ and induces an orientation preserving automorphism of $F_2$. Define $\Aff_u(\tilde X)$ to be the subgroup of affine automorphisms descending to $\Ori$ via $u$, and let $\Trans_u(\tilde X) = \Aff_u(\tilde X)\cap \Gal(\tilde X/E^*)$. We claim that we have a commutative diagram with exact rows
\begin{gather*}
\begin{diagram}[height=4ex]
 1 &\rTo &N(H) &\rTo^c   &\Stab^+(H) &\rTo^\beta &\SL(\Ori^*) &\rTo &1\\
   &     &\uTo_\isom   &       &\uTo_\isom            &     &||    &\\
 1 &\rTo &\Trans_u(\tilde X) &\rTo &\Aff_u(\tilde X)    &\rTo^\der &\SL(\Ori^*) &\rTo &1
\end{diagram}
\end{gather*}
The bottom row is exact by the definition of $\Trans_u(\tilde X)$ and the fact that the canonical projection $\Aff_u(\tilde X) \to \Aff(\Ori^*)$ is surjective. Again by \cite{gabialgo}, the image of $\Aff_u(\tilde X)$ in $\Aut^+(F_2)$ is precisely $\Stab^+(H)$ and the image of $\Stab^+(H)$ under $\beta$ is $\SL(\Ori^*)$. Finally, $c(F_2)\cap \Stab^+(H) = c(N(H))$.
\par
The first claim of the proposition now follows from the fact that the kernel of the canonical projection $\Aff_u(\tilde X)\to \Aff(\Ori^*)$ is precisely $\Gal(\tilde X/\Ori^*) \isom H$.
\par
As to the second claim, the description of the isomorphism $\Aff(\tilde X) \to \Aut^+(F_2)$ implies that $\psi(f)$ is the class of $\ol{f}_*$, where $\ol{f}$ is the map induced by $f$ on $E^*$. Thus for every path $\gamma \in \pi_1(\Ori)$, $\psi(f)(p_*\gamma) = \ol{f}_*p_*\gamma = p_*f_*\gamma$. Since $p_*$ is an isomorphism onto its image, it maps the conjugacy class of $f_*\gamma$ in $\pi_1(\Ori)$ to the conjugacy class of $\psi(f)(p_*\gamma)$, which proves the claim.
\end{proof}
\par
The input of our algorithm is an origami $p:\Ori\to E$ of degree $\deg p = d$ and genus $g$, given as graph $\Graph(\Ori^*)$.
\par
\begin{itemize}
 \item[\textit{Step 1:}] Construct a basis of $\pi_1(\Ori^*)$. Choose a maximal spanning tree $T$ in $\Graph(\Ori^*)$. The edges $t_1,\dots,t_{d+1}$ not in $T$ represent a basis of $\pi_1(\Ori^*)$. Mapping this basis to $H\subgp F_2$, we obtain a free system of generators $u_1,\dots,u_{d+1}$ for $H$.
 \item[\textit{Step 2:}] Compute a system of generators $\gamma_1,\dots,\gamma_r$ of $\Stab^+(H)$ (see \cite{FinsterDA}).
 \item[\textit{Step 3:}] Lift the action of $\gamma_i$ on the generators of $H$ to an action on $t_1,\dots,t_{d+1}$. Let $w_{ij} = \gamma_i(u_j)$; this is a word  in $x$, $y$ which can be decomposed as a word in the generators of $\pi_1(\Ori^*)$ by writing down all non-tree edges crossed on the path in $\Graph(\Ori^*)$ determined by $w_{ij}$.
 \item[\textit{Step 4:}] Find an extended symplectic basis $a_1,b_1,\dots,a_g,b_g,c_1,\dots,c_{m-1}$ of $\pi_1(\Ori^*)$ by surface normalization as in \cite{Stillwell80}. Here, the $c_i$ are loops about all but one puncture in $p^{-1}(e)$.
 \item[\textit{Step 5:}] For each generator $\gamma_i$ of $\Stab^+(H)$, project its action on the generators of $\pi_1(\Ori^*)$ to $\GL(H_1(\Ori^*,\ZZ))$. Then make a base change to the extended symplectic basis found in Step 4. Discard the basis elements representing loops around the punctures to obtain the action of $\Aff(\Ori^*)$ on $H_1(\Ori,\ZZ)$.
\end{itemize}
\par
Proposition \ref{prop:affine-group-and-AutF2} implies the correctness of the above algorithm.
\par
The action of $\Aff(\Ori^*)$ on $H^1(\Ori,\ZZ)$ is obtained by using the duality of $H_1$ and $H^1$. Note that if $\gamma$ acts by $A\in \Sp(2g,\ZZ)$ \wrt a symplectic basis of $H_1(\Ori,\ZZ)$, then $(A^{-1})^T$ is the matrix of the left action of $\gamma$ on $H^1(\Ori,\ZZ)$ \wrt the dual basis. While there is no substantial difference between the action of $\Aff(\Ori)$ on homology and on cohomology, we prefer to work with cohomology, since it exhibits a better functorial behavior.

%**********************************
\subsection{Splitting principles}
%**********************************
We describe two principles for finding subrepresentations of a monodromy representation. 
\par
Given two Veech surfaces $(X,\omega)$, $(Y,\nu)$, we call a non-constant
 holomorphic map $f:X\to Y$ a \textit{Veech covering} if $f^*\nu = \omega$ and 
if the Veech group of $Y$ minus the ramification points of $f$ is a lattice. Note that this happens 
if and only if all branch points are periodic points, \ie have finite $\Aff(Y,\nu)$-orbits. 
\par
A Veech covering $p:(X,\omega)\ra (Y,\nu)$ between Veech surfaces induces a subrepresentation as follows. By \cite[Theorem 4.8]{gutkinjudge} the elements of $\Aff(X,\omega)$ that descend via $p$ to $Y$ form a finite-index subgroup $\Aff(X,\omega)_p$ of $\Aff(X,\omega)$. Let 
\[\varphi_p: \Aff(X,\omega)_p\ra \Aff(Y,\nu)\]
 be the group homomorphism that maps $f\in \Aff(X,\omega)_p$ to $\ol{f} \in \Aff(Y,\nu)$ such that $p\circ f = \ol{f}\circ p$. The image of $\varphi_p$ is the finite-index subgroup $\Aff(Y,\nu)^p$ of $\Aff(Y,\nu)$ of affine diffeomorphisms, that lift to $(X,\omega)$.
\par
\begin{prop}\label{prop:Splitting by covering map}
Let $p:(X,\omega)\ra (Y,\nu)$ be a Veech covering between Veech surfaces and let 
$\rho:\Aff(X,\omega)\ra \Sp(H^1(X,\ZZ))$ be the monodromy representation of $(X,\omega)$. Then the image $U$ of $H^1(Y,\ZZ)$ under \[p^*:H^1(Y,\ZZ)\ra H^1(X,\ZZ)\]
is an $\Aff(X,\omega)_p$-invariant symplectic subspace of $H^1(X,\ZZ)$ polarized by $\deg(p)\cdot Q_X$.
\par
The map $p^*$ is equivariant for the action of $\Aff(X,\omega)_p$ on $U$ and $\Aff(Y,\nu)^p$ on $H^1(Y,\ZZ)$.
\end{prop}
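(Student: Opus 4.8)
The plan is to verify the three assertions — invariance, symplecticity with the stated polarization, and equivariance — by unravelling the functoriality of the pullback map $p^*$ on cohomology together with the defining property of $\Aff(X,\omega)_p$. First I would recall that for $f\in \Aff(X,\omega)_p$ we have by construction a commuting square $p\circ f = \varphi_p(f)\circ p$, hence on cohomology $f^*\circ p^* = p^*\circ \varphi_p(f)^*$ (contravariance reverses the order but the square still commutes). Since the monodromy representation is $\rho(f) = (f^{-1})^*$, this identity rewrites as $\rho(f)\circ p^* = p^*\circ \rho_Y(\varphi_p(f))$, where $\rho_Y$ is the monodromy representation of $(Y,\nu)$. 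In particular $\rho(f)$ maps the image $U = p^*H^1(Y,\ZZ)$ into itself, which is the invariance claim, and the displayed identity restricted to $U$ is exactly the asserted equivariance of $p^*$ for the $\Aff(X,\omega)_p$-action on $U$ and the $\Aff(Y,\nu)^p$-action on $H^1(Y,\ZZ)$ (using that the image of $\varphi_p$ is $\Aff(Y,\nu)^p$).

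Next I would address the symplectic statement. The key input is the projection formula for the intersection pairing under a degree-$d$ covering: for $\alpha,\beta\in H^1(Y,\ZZ)$ one has $Q_X(p^*\alpha,p^*\beta) = \deg(p)\cdot Q_Y(\alpha,\beta)$, which follows from $p_*p^* = \deg(p)\cdot\id$ and the adjunction $Q_X(p^*\alpha,x) = Q_Y(\alpha,p_*x)$ between pullback and pushforward. This shows simultaneously that the restriction of $Q_X$ to $U$ is, under the identification $p^*:H^1(Y,\ZZ)\isom U$ (an isomorphism since $p^*$ is injective on cohomology for a non-constant map of curves), equal to $\deg(p)\cdot Q_Y$; as $Q_Y$ is a perfect alternating pairing on $H^1(Y,\ZZ)$, the restricted form on $U$ is non-degenerate, so $U$ is a symplectic subspace, and the polarization is $\deg(p)\cdot Q_X$ as stated. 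Injectivity of $p^*$ itself I would take from the standard fact that $p_*p^*$ is multiplication by the degree, hence $p^*$ has trivial kernel over $\QQ$ and therefore over $\ZZ$ as well since $H^1(Y,\ZZ)$ is torsion-free.

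The only point needing a little care — and the main (mild) obstacle — is the compatibility between the \emph{topological} covering $p:X\to Y$ and the affine structure: one must know that for $f\in\Aff(X,\omega)_p$ the descended map $\ol f = \varphi_p(f)$ is genuinely an affine diffeomorphism of $(Y,\nu)$, so that it acts on $H^1(Y,\ZZ)$ through $\rho_Y$, and conversely that every $\ol f\in\Aff(Y,\nu)^p$ lifts. This is precisely the content of \cite[Theorem 4.8]{gutkinjudge} together with the definitions of $\Aff(X,\omega)_p$, $\varphi_p$ and $\Aff(Y,\nu)^p$ recalled just before the proposition; the fact that $f^*\nu = \omega$ guarantees $p$ intertwines the translation atlases, so affineness passes up and down. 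Granting that, the remaining arguments are the formal manipulations of pullback/pushforward above, and nothing else is needed. One may also note in passing that $U$ is defined over $\QQ$ (indeed over $\ZZ$) and carries a sub-VHS by Deligne's semisimplicity, which is what makes it a legitimate input for Theorem~\ref{main thm}, though this is not required for the proposition itself.
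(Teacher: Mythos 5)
Your proposal is correct and follows essentially the same route as the paper: the invariance and equivariance come from the single chain of identities $(f^{-1})^*p^*(c) = (p\circ f^{-1})^*(c) = (\ol{f}^{-1}\circ p)^*(c) = p^*((\ol{f}^{-1})^*(c))$, and the polarization statement from $Q_X(p^*c_1,p^*c_2) = \deg(p)\,Q_Y(c_1,c_2)$. You supply slightly more justification than the paper does for the latter (via $p_*p^*=\deg(p)\cdot\id$ and the resulting non-degeneracy on $U$), but the argument is the same.
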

\begin{proof}
 Let $f\in \Aff(X,\omega)_p$ and $\ol{f}\in \Aff(Y,\nu)$ such that $p\circ f = \ol{f} \circ p$. Then for every $c\in H^1(Y,\ZZ)$
\[(f^{-1})^*(p^*(c)) =  (p\circ f^{-1})^*(c) = (\ol{f}^{-1}\circ p)^*(c) = p^*((\ol{f}^{-1})^*(c))\ ,\]
proving $(f^{-1})^*(\Image(p^*))\subset \Image(p^*)$. The computation also shows that $p^*$ is equivariant. Finally, $p^*$ is a symplectic map and $Q_X(p^*c_1,p^*c_2) = \deg p\cdot Q_Y(c_1,c_2)$.
\end{proof}
\par
We note that the uniformizing subrepresentation of an origami is induced by the Veech covering $p:\Ori \to E$.
\par
Secondly, the group $\Aut(X,\omega)$ of affine biholomorphisms acts on $H^1(X,\RR)$ and $H^1(X,\CC)$ and we can use representation theory of finite groups to decompose these vector spaces into a direct sum $\K[\Aut(X,w)]$-modules (with $\K = \RR$ or $\CC$). This technique has been successfully applied in \cite{MYZHomology}.
\par
\begin{prop}\label{prop:Splitting by Galois group}
Let $(X,\omega)$ be a Veech surface and let $G \subgp \Aut(X,\omega)$. The action of $\Aff(X,\omega)$ on $H^1(X,\K)$, restricted to the normalizer $N(G)$ of $G$ in $\Aff(X,\omega)$, permutes the isotypic components of the decomposition of $H^1(X,\K)$ into $G$-modules and there is a finite index subgroup $\Gamma\subgp \Aff(X,\omega)$ such that every isotypic component is $\Gamma$-invariant. 
\end{prop}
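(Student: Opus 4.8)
The plan is to recall the standard isotypic decomposition for the action of a finite group $G$ on the $\K$-vector space $W = H^1(X,\K)$ and then check that normalizing elements merely permute its pieces. Write $W = \bigoplus_{[\sigma]} W_\sigma$, where $[\sigma]$ runs over the irreducible $\K$-representations of $G$ and $W_\sigma$ is the $\sigma$-isotypic component, \ie the sum of all $G$-submodules of $W$ isomorphic to $\sigma$. This decomposition is canonical (it does not depend on any choice of complement), which is the point that makes functoriality work. Now take $f \in N(G) \subgp \Aff(X,\omega)$ and let $A = \rho(f) \in \GL(W)$ be the induced linear map. For $g \in G$ we have $f g f^{-1} \in G$, hence for a $G$-submodule $U \isom \sigma$ the image $A(U)$ is again a $G$-submodule, with $G$-action twisted by the automorphism $g \mapsto f^{-1} g f$ of $G$. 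So $A$ carries $W_\sigma$ to $W_{\sigma'}$, where $\sigma' = \sigma \circ (\text{conjugation by } f^{-1})$ is again an irreducible representation of $G$. This shows that $N(G)$ permutes the finite set $\{W_\sigma\}$ of isotypic components.

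Next I would produce the finite-index subgroup $\Gamma$. Since $\Aut(X,\omega)$ is finite and contained in $\Aff(X,\omega)$, the subgroup $N(G)$ has finite index in $\Aff(X,\omega)$ if and only if $G$ is normalized by a finite-index subgroup; but in fact one may simply take $N(G)$ to be the starting group (replacing $\Aff(X,\omega)$ by $N(G)$ loses nothing for our purposes, and $[\Aff(X,\omega):N(G)] < \infty$ because the normalizer of a finite subgroup always has finite index — the conjugates of $G$ form a finite set and the stabilizer of $G$ under conjugation is $N(G)$). The permutation action of $N(G)$ on the finite set of isotypic components gives a homomorphism $N(G) \to \Sym(\{W_\sigma\})$ to a finite symmetric group; let $\Gamma$ be its kernel. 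Then $\Gamma$ has finite index in $N(G)$, hence in $\Aff(X,\omega)$, and by construction every $W_\sigma$ is $\Gamma$-invariant.

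The steps, in order: (1) set up the isotypic decomposition and emphasize its canonicity; (2) show that an element of $N(G)$ sends a $G$-submodule of type $\sigma$ to a $G$-submodule of type $\sigma \circ \mathrm{conj}_{f^{-1}}$, hence permutes the $W_\sigma$; (3) observe $[\Aff(X,\omega):N(G)] < \infty$ because $G$ is finite; (4) take $\Gamma$ to be the kernel of the resulting permutation representation of $N(G)$. I do not expect a serious obstacle here; the only subtlety worth spelling out carefully is point~(2) — that the twisted $G$-action on $A(U)$ is still irreducible and that the bijection $\sigma \mapsto \sigma \circ \mathrm{conj}_{f^{-1}}$ on isomorphism classes of irreducibles is what governs the permutation — and, over $\K = \RR$, that all of this respects the real structure, which it does because conjugation by $f$ is a genuine group automorphism of $G$ and $\rho$ is defined over $\RR$.
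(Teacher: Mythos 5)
Your argument follows the same route as the paper's: decompose $H^1(X,\K)$ into isotypic components for $G$, show that each $f\in N(G)$ sends an irreducible $G$-submodule to an irreducible $G$-submodule (for the $G$-action twisted by conjugation), conclude that $N(G)$ permutes the finitely many isotypic components, and take $\Gamma$ to be the kernel of the resulting finite permutation representation. Your step (2) is in fact slightly more explicit than the paper's, which only verifies that $(f^{-1})^*(V)$ is again a $G$-stable irreducible submodule, using $gf=f\tilde g$, without tracking the twist $\sigma\mapsto\sigma\circ\mathrm{conj}_{f^{-1}}$.

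The one point that does not hold up as written is your justification of $[\Aff(X,\omega):N(G)]<\infty$. It is false that the normalizer of a finite subgroup of a group always has finite index, and false that the conjugates of a finite subgroup always form a finite set: in the infinite dihedral group the normalizer of the subgroup generated by a reflection is finite, hence of infinite index. Even inside $\Aff(X,\omega)$ this can fail for an arbitrary finite subgroup (e.g.\ one generated by a lift of an elliptic element of the Veech group). What makes the claim true here is the hypothesis $G\subgp\Aut(X,\omega)$ together with the fact that $\Aut(X,\omega)$ is a finite \emph{normal} subgroup of $\Aff(X,\omega)$ (it is the kernel of the action on the Teichm\"uller disk); hence every conjugate of $G$ is a subgroup of the finite group $\Aut(X,\omega)$, of which there are only finitely many, so $N(G)$, being the stabilizer of $G$ under the conjugation action, has finite index. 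This normality observation is exactly the first line of the paper's proof and is the only ingredient missing from yours; once it is inserted, your argument is complete.
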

\begin{proof}
As $\Aut(X,\omega)$ is normal in $\Aff(X,\omega)$, the normalizer $N(G)$ of $G$ in $\Aff(X,\omega)$ has finite index in $\Aff(X,\omega)$. For all $g\in G$, and $f\in N(G)$, there exists $\tilde g\in G$, such that $gf = f\tilde g$. Therefore for all irreducible $\K[G]$-submodules $V$ of $H^1(X,\K)$, we have 
\[(g^*)^{-1}\circ (f^*)^{-1}(V) = ((gf)^*)^{-1}(V) = ((f\tilde g)^*)^{-1}(V) = (f^*)^{-1}(V),\]
which shows that $(f^{-1})^*(V)$ is another irreducible $\K[G]$-module inside $H^1(X,\K)$. Hence every $f\in N(G)$ induces a permutation of the isotypic components of the representation of $G$. Thus there is a finite index subgroup $\Gamma\subgp N(G)$ that leaves every isotypic component invariant.
% For the remaining assertions, recall that a subspace of $H^1(X,\C)$ of the form $V\oplus \ol{V}$ with $V\subseteq \Omega^1_X(X)$ is symplectic by virtue of Remark \ref{rem:Hodge norm on H10}.
\end{proof}
\par
% Needed?
% Note that we can also find a finite-index subgroup $\Gamma\subgp \Aff(X,\omega)$ such that each element $\gamma\in \Gamma$ acts on each isotypic component $V$ of the decomposition of $H^1(X,\K)$ as a $\K[G]$-linear automorphism. Indeed, it suffices to take $\Gamma = \bigcap_{g\in G}C(g)$, where $C(g)$ is the centralizer of $g$ in $\Aff(X,\omega)$.
\begin{rem}
 In both cases, the subrepresentations carry a VHS. This follows directly from Deligne's semisimplicity theorem.
\end{rem}
\par
There can be invariant subspaces not directly related to these two constructions due to hidden symmetries of the Jacobian (\eg endomorphisms of Hecke type as discussed in \cite{EllenbergEndoJac}).

%**********************************
\subsection{Examples}
%**********************************
The examples discussed in the following are both origamis and stem from intermediate covers of the characteristic origami $\CharL22$
discussed in \cite{Her06}. We remain rather brief here; a complete discussion including all matrix computations is found in \cite{KappesThesis}.
\par
We note that in our examples the individual Lyapunov exponents can also be obtained from the formula for their sum, combined with knowledge on intermediate coverings.
\par
%this is mentioned above!
% On an appropriate finite cover, we have a VHS and consequently a period map. 
% This period map descends to the orbifold curve, and we can compute its degree
% there instead.
In the following, denote $T = \textmatrix{1}{1}{0}{1}$ and $S =\textmatrix{0}{-1}{1}{0}$.

\subsubsection*{First example}
Let $\L22$ be the origami given by
\[r = (1\ 2)(3),\qquad u = (1\ 3)(2),\]
where the permutation $r$ ($u$) is the monodromy the horizontal (vertical) generator of $\pi_1(E^*)$.
It is the smallest origami of genus 2. Its affine group $\SL(\L22)$ is
isomorphic to the index $3$ subgroup $\Gamma_{\Theta}$ of $\SL_2(\ZZ)$ generated by $S$, $T^2$. 
It follows that $\SL(\L22)$
is isomorphic to the orientation preserving subgroup of the $\Delta(2,\infty,\infty)$-triangle group
of the hyperbolic triangle with vertices $(i,\infty,1)$. In particular, the stabilizer
of $1$ is generated by $T^{2}S$.
\begin{figure}[t]
\begin{center}
\raisebox{20mm}{\mbox{
% Origami of size 3
\begin{xy}
<0.8cm,0cm>:
(4,-1)*{\OriSquare{1}{}{1}{2}{1}};
(5,-1)*{\OriSquare{2}{1}{}{}{3}};
(5,0)*{\OriSquare{3}{3}{2}{3}{}};
\end{xy}
}}\hspace*{15mm}
{\mbox{\includegraphics[scale=0.5]{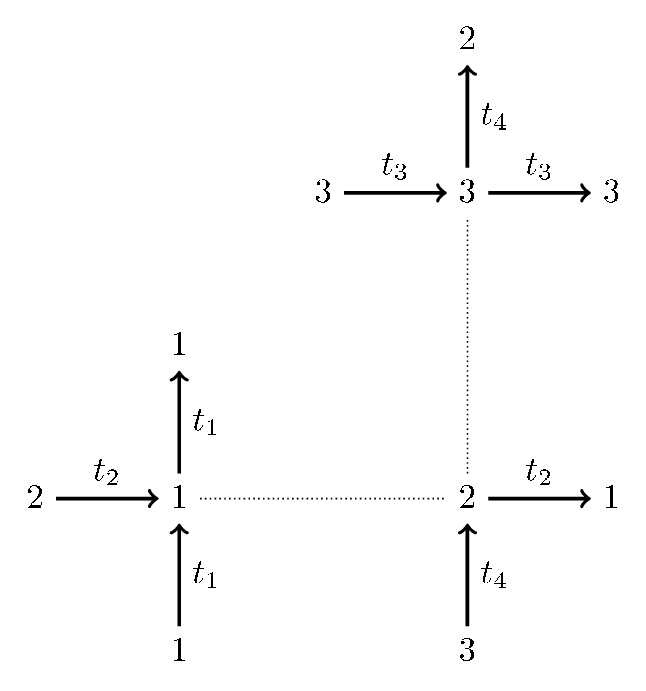}}}
\end{center}
\begin{center}
 \caption{\textit{The origami $\L22$ and a maximal spanning tree of $\Graph(\L22^*)$} \label{fig:origami L22}}  
\end{center}
\end{figure}
\par
To analyze the action of $\Aff(\L22)$ on $H^1(\L22,\ZZ)$, let $t_1,\dots,t_4$ be the basis of $\pi_1(\L22^*)$ associated with the non-tree edges of a maximal spanning tree as in Figure \ref{fig:origami L22}. Then
\[a_1 = \ol{t}_1, \quad b_1 = -\ol{t}_2,\quad a_2 = \ol{t}_3,\quad b_2 = \ol{t}_4 - \ol{t}_1\]
is a symplectic basis of $H_1(\L22,\ZZ)$. Let further
\[h = \ol{t}_3 + \ol{t}_2 = a_2 - b_1 \quad \text{and}\quad v = \ol{t}_1 + \ol{t}_4 = 2a_1 + b_2\]
be the sum of all horizontal, respectively vertical cycles. The action of $\Aff(\L22)$ on $H^1(\L22,\ZZ)$ splits over $\QQ$ into two 2-dimensional representations. 
The uniformizing representation is spanned by the image of $h$ and $v$ 
in $H^1(\L22,\ZZ)$ under $a\mapsto i(\cdot, a)$, where $i(\cdot,\cdot)$ denotes the symplectic intersection form on homology. 
The representation $\rho_{\L22,2}:\Aff(\L22) \to \SL_2(\ZZ)$, complementary to the uniformizing representation, is given by
\[T^2\mapsto T,\quad S\mapsto S^{-1}\]
with respect to the basis
\[a_1^* - 2b_2^*, \quad b_1^* + a_2^*.\]
\begin{prop}
 The non-negative Lyapunov exponent associated with $\rho_{\L22,2}$ is $1/3$.
\end{prop}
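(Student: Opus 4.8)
The plan is to feed the data of $\rho_{\L22,2}$ straight into Theorem~\ref{main thm}, taking $\Gamma = \Aff(\L22)$ (which, as stated in the theorem, may carry torsion and have a kernel on $\HH$; if one prefers to work with an honest family of curves, one replaces $\Gamma$ by a finite-index subgroup, which by the finite-index invariance of the Lyapunov spectrum and by Lemma~\ref{lem:going up} changes neither side of the formula) and taking $V$ the rank-$2$ $\QQ$-submodule of $H^1(\L22,\QQ)$ spanned by $a_1^* - 2b_2^*$ and $b_1^* + a_2^*$, so that $\rho_V = \rho_{\L22,2}$. The first step is to identify $\Delta := \rho_V(\Gamma)$: since $\SL(\L22)\cong\Gamma_\Theta$ is generated by $S$ and $T^2$, the image $\Delta$ is generated by $\rho_V(S) = S^{-1}$ and $\rho_V(T^2) = T$, hence $\Delta = \SL_2(\ZZ)$, which is discrete in $\SL_2(\RR)$ and infinite; so we are in the nontrivial case of Theorem~\ref{main thm}.

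Second, I would compute the volume factor. The action of $\Aff(\L22)$ on $\HH$ factors through $\der$ onto $\SL(\L22) = \Gamma_\Theta$, a subgroup of index $3$ in $\SL_2(\ZZ)$, so $\vol(\HH/\Gamma) = \vol(\HH/\Gamma_\Theta) = 3\,\vol(\HH/\SL_2(\ZZ)) = 3\,\vol(\HH/\Delta)$, giving $\vol(\HH/\Delta)/\vol(\HH/\Gamma) = 1/3$.

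Third, I would evaluate the cusp sum. Fix $\Delta_0 = \gen{T}$, the maximal parabolic subgroup of the one-cusped group $\SL_2(\ZZ)$. An Euler-characteristic count (or the $\Delta(2,\infty,\infty)$-triangle description) shows $\Gamma_\Theta$ has exactly two cusps, $\infty$ and $1$, with maximal parabolic subgroups generated (via $\der$) by $T^2$ and $T^2S$ respectively. Under $\rho_V$ the first generator maps to $\rho_V(T^2) = T \in \Delta_0\setminus\{\pm I\}$, contributing $(\gen{T}:\gen{T}) = 1$; the second maps to $\rho_V(T^2S) = TS^{-1} = \textmatrix{-1}{1}{-1}{0}$, which has trace $-1$ and is therefore elliptic of order $3$, hence not conjugate into $\Delta_0$, so the cusp $1$ contributes nothing to the sum. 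Thus $\sum_{\Gamma_0}(\Delta_0:\rho_V(\Gamma_0)) = 1$, and Theorem~\ref{main thm} yields $\lambda = \tfrac13\cdot 1 = \tfrac13$.

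Each step is short, and the only points demanding care are the two combinatorial verifications: that $\rho_V$ genuinely surjects onto $\SL_2(\ZZ)$ (which pins down the volume factor as exactly $1/3$), and that of the two parabolic generators of $\Gamma_\Theta$ precisely one has parabolic image — the elliptic image $\textmatrix{-1}{1}{-1}{0}$ at the cusp $1$ is exactly what makes the sum $1$ rather than $2$. The torsion in $\Aff(\L22)$ (e.g. the hyperelliptic involution, acting as $-I$ on $V$ and trivially on $\HH$) and the resulting nontrivial kernel of $\rho_V$ need no separate treatment, since Theorem~\ref{main thm} is stated to cover this; alternatively one may pass to a torsion-free finite-index subgroup and invoke Proposition~\ref{prop:Rank2-formula} together with Lemma~\ref{lem:compute-degree} directly.
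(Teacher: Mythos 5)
Your proof is correct and is essentially the paper's own argument: the paper likewise observes that $T^2\mapsto T$ is parabolic while $T^2S\mapsto TS^{-1}$ is elliptic of order $3$, concludes via Lemma~\ref{lem:compute-degree} that $\deg\ol{p}=1$ (which is exactly your cusp sum), and then gets $\lambda=\vol(\HH/\SL_2(\ZZ))/\vol(\HH/\Gamma_\Theta)=1/3$ from Proposition~\ref{prop:Rank2-formula} and Lemma~\ref{lem:going up}. Invoking Theorem~\ref{main thm} directly, as you do, packages the same computation.
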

\begin{proof}
 Let $p: \HH\to \HH$ denote the period map of the VHS associated with $\rho_{\L22,2}$. Since $T^2 \mapsto T$, while $T^2S\mapsto TS^{-1}$, an element of order $3$, the preimage of the cusp $i\infty$ of $\SL_2(\ZZ)$ under $p$ is only the cusp $i\infty$. By Lemma \ref{lem:compute-degree}, $\deg \ol{p} = 1$. By Proposition \ref{prop:Rank2-formula} and Lemma \ref{lem:going up},
\[\lambda = \frac{\vol(\HH/\SL_2(\ZZ))}{\vol(\HH/\Gamma_{\Theta})} = 1/3.\]
\end{proof}
\par
Note that this matches Bainbridge's result on Lyapunov exponents of invariant measures on $\Stratum_2$ \cite{Bainbridge07}.

% 
% A $\Delta(m,n,p)$ triangle group has presentation
% \[\pres{a,b,c}{a^2,b^2,c^2, (ab)^m, (bc)^n, (ca)^p}\]
% Its orientation preserving subgroup is the index 2-subgroup generated
% by all words of even length
% \[\Delta^+(m,n,p) = \pres{x,y}{x^m,y^n,(xy)^p}\]
% with $x = ab$, $y=bc$.
\begin{figure}[ht]
\begin{center}
\mbox{
\begin{xy}
<1cm,0cm>:
(1,-2)*{\OriSquare{1}{4}{6}{7}{}};
(1,-3)*{\OriSquare{2}{}{}{9}{3}};
(2,-3)*{\OriSquare{3}{}{2}{}{7}};
(3,-2)*{\OriSquare{4}{7}{9}{1}{}};
(3,-3)*{\OriSquare{5}{}{}{}{9}};
(4,-3)*{\OriSquare{6}{}{8}{}{1}};
(5,-2)*{\OriSquare{7}{1}{3}{4}{}};
(5,-3)*{\OriSquare{8}{}{}{}{6}};
(6,-3)*{\OriSquare{9}{2}{5}{}{4}};
(0.5,-1.45)*{\times};
(0.5,-2.45)*{\times};
(2.5,-3.45)*{\times};
(3.5,-3.45)*{\times};
(5.5,-2.45)*{\times};
(5.5,-1.45)*{\times};
(6.5,-2.45)*{\times};
(1.5,-1.45)*{\circledast};
(2.5,-1.45)*{\circledast};
(3.5,-2.45)*{\circledast};
(4.5,-2.45)*{\circledast};
(4.5,-3.45)*{\circledast};
(5.5,-3.45)*{\circledast};
(0.5,-3.47)*{\bullet};
(1.5,-2.47)*{\bullet};
(1.5,-3.47)*{\bullet};
(2.5,-2.47)*{\bullet};
(3.5,-1.47)*{\bullet};
(4.5,-1.47)*{\bullet};
(6.5,-3.47)*{\bullet};
\end{xy}}
\end{center}
\begin{center}
 \caption{\textit{The origami $\Qmod9$} \label{fig:origami M}}  
\end{center}
\end{figure}
\par
\begin{figure}[ht]
\begin{center}
\includegraphics[scale=1]{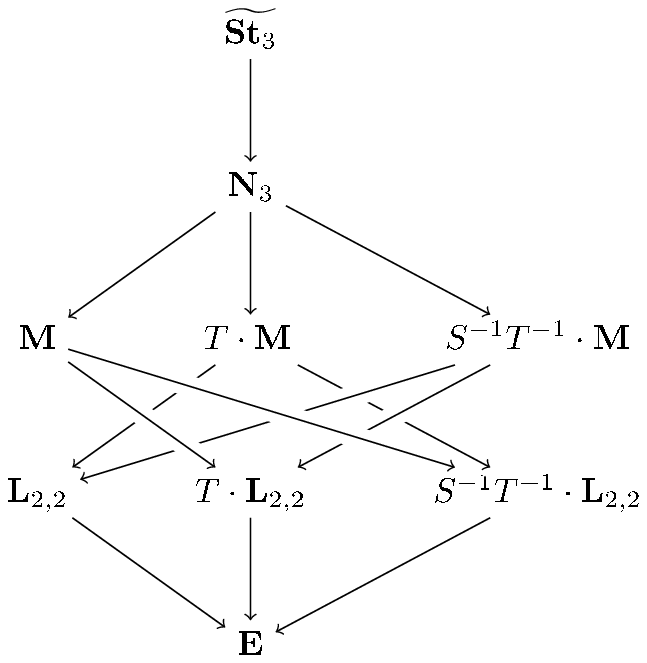}
\end{center}
\begin{center}
 \caption{\textit{Intermediate covers of $\CharL22$. $\SL_2(\ZZ)$-orbits of $\L22$, $\Qmod9$ and a third origami $\Mod27_3$ with $27$ squares and Veech group $\SL_2(\ZZ)$. Arrows indicate Veech covering maps} \label{fig:poset}}  
\end{center}
\end{figure}
\par
\subsubsection*{Second example}
The second example is the origami $\Qmod9$ (see Figure \ref{fig:origami M}) given by 
\[r = (1,4,7)(2,3,5,6,8,9)\quad \text{and}\quad u = (1,6,8,7,3,2)(4,9,5),\]
and belongs to $\Stratum_4(2,2,2)^{\Odd}$. Here, ``odd'' refers to the connected component of surfaces of odd spin structure with respect to the classification of connected components of strata in \cite{KontsevichZorich03}.
The affine group of $\Qmod9$ is also equal to
$\Gamma_{\Theta}$. The monodromy representation 
$\rho_{\Qmod9} : \Aff(\Qmod9) \to H^1(\Qmod9,\ZZ)$ restricted 
to $\Gamma(2) \subgp \SL(\Qmod9)$ splits over $\QQ$ into four symplectic 
subrepresentations, each of rank $2$. Apart from the uniformizing representation $\rho_1$, 
there are two representations
$\rho_{2}$, $\rho_{3}$ that are induced from coverings to genus 2-origamis. 
Figure \ref{fig:poset} shows the poset of intermediate covers of $\CharL22$.
\par
Let us denote $\rho_{\Qmod9,4}$ the representation complementary to 
$\rho_{\Qmod9,1}\oplus \rho_{\Qmod9,2}\oplus \rho_{\Qmod9,3}$. It already splits off over $\SL(\Qmod9)$, 
and is given by
\[\rho_{\Qmod9,4}(T^2) = T^{-1}S = \begin{pmatrix}
                                    -1 & -1  \\ 1 & 0
                                   \end{pmatrix}
,\qquad \rho_{\Qmod9,4}(S) = S^{-1}.\]
\begin{prop}\label{prop:exa2-Lyapspectrum}
The non-negative Lyapunov spectrum of $\Qmod9$ is
\[1,\tfrac{1}{3}, \tfrac{1}{3}, \tfrac{1}{3}.\]
\end{prop}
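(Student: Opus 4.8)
The plan is to compute the non-negative Lyapunov spectrum of $\Qmod9$ as the union of the spectra of its four rank-$2$ subrepresentations $\rho_{\Qmod9,1},\dots,\rho_{\Qmod9,4}$, using the fact (recalled at the end of Section~\ref{sec:background}) that the Lyapunov spectrum of a VHS is the union of the spectra of its direct summands, and that it is unchanged under passing to a finite-index subgroup. Since the splitting into these four pieces is defined over $\QQ$ and valid on $\Gamma(2)\subgp\SL(\Qmod9)$, each summand contributes one non-negative exponent. The uniformizing piece $\rho_{\Qmod9,1}$ contributes $\lambda=1$ by definition of uniformizing (its period map is the identity, so $\deg\ol{p}=1$ and the volumes cancel in Proposition~\ref{prop:Rank2-formula}).

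Next I would handle the two "covering" pieces $\rho_{\Qmod9,2}$ and $\rho_{\Qmod9,3}$. These are pulled back via Veech coverings $\Qmod9\to(Y_j,\nu_j)$ to genus-$2$ origamis (the ones appearing in Figure~\ref{fig:poset}), and by Proposition~\ref{prop:Splitting by covering map} the map $p^*$ is equivariant, so the associated period map of $\rho_{\Qmod9,j}$ is (a reparametrization of) that of the corresponding rank-$2$ piece on the genus-$2$ origami. The genus-$2$ origamis in question are in the $\SL_2(\ZZ)$-orbit of $\L22$, whose non-uniformizing rank-$2$ representation was just shown to have Lyapunov exponent $1/3$ (in the First Example). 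By the finite-index invariance of Lyapunov exponents and commensurability, each of $\rho_{\Qmod9,2}$ and $\rho_{\Qmod9,3}$ therefore also has non-negative exponent $1/3$.

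Finally, for the remaining piece $\rho_{\Qmod9,4}$, which splits off already over $\SL(\Qmod9)=\Gamma_\Theta$, I would apply Theorem~\ref{main thm} (equivalently Proposition~\ref{prop:Rank2-formula} together with Lemma~\ref{lem:compute-degree} and Lemma~\ref{lem:going up}) directly from the explicit matrices $\rho_{\Qmod9,4}(T^2)=T^{-1}S$ and $\rho_{\Qmod9,4}(S)=S^{-1}$. The image $\rho_{\Qmod9,4}(\Gamma_\Theta)$ lies in $\SL_2(\ZZ)$; I would check it is all of $\SL_2(\ZZ)$ (it is generated by $T^{-1}S$ and $S^{-1}$, and $\SL_2(\ZZ)=\langle S,ST\rangle$), so $\rho_{\Qmod9,4}$ is in fact surjective onto $\SL_2(\ZZ)$. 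The parabolic $T^2S$ generating the stabilizer of the cusp $1$ in $\Gamma_\Theta$ maps to $(T^{-1}S)S^{-1}=T^{-1}$, a parabolic; one then identifies which $\SL_2(\ZZ)$-cusp this is and with what index, and likewise for the other cusp of $\Gamma_\Theta$ (the cusp $i\infty$, whose generator is $T^2\mapsto T^{-1}S$, of trace $-1$, hence elliptic of order $3$ — so that cusp maps into the interior). Thus only one cusp of $\Gamma_\Theta$ maps to the unique cusp of $\SL_2(\ZZ)$, with ramification index $1$, giving $\deg\ol{p}=1$ and hence
\[\lambda=\frac{\vol(\HH/\SL_2(\ZZ))}{\vol(\HH/\Gamma_\Theta)}=\frac{1}{3}.\]
Collecting the four contributions $1,\tfrac13,\tfrac13,\tfrac13$ yields the claim.

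The main obstacle is bookkeeping rather than conceptual: one must verify carefully that the four $\QQ$-subrepresentations are as claimed (in particular that $\rho_{\Qmod9,4}$ splits off over $\SL(\Qmod9)$ and has the stated matrices), correctly identify the genus-$2$ targets of the Veech coverings with origamis commensurable to $\L22$, and track the cusp/ramification data of $\rho_{\Qmod9,4}$ — especially checking that the generator of the second cusp of $\Gamma_\Theta$ is sent to an elliptic (not parabolic) element, so that it does not contribute to the degree computation. The matrix computations underlying the splitting are deferred to \cite{KappesThesis}.
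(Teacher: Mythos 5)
Your proposal is correct and follows essentially the same route as the paper: split the VHS into the uniformizing piece, the two rank-$2$ pieces pulled back via the Veech coverings to genus-$2$ origamis in $\Stratum_4(2,2,2)\to\Stratum_2(2)$ (each contributing $1/3$), and handle $\rho_{\Qmod9,4}$ by the cusp/degree computation of Lemma~\ref{lem:compute-degree} exactly as in the $\L22$ example. Your explicit verification that $T^2S\mapsto T^{-1}$ is parabolic while $T^2\mapsto T^{-1}S$ is elliptic, giving $\deg\ol{p}=1$ and $\lambda=1/3$, is precisely the step the paper compresses into ``as in the first example.''
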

\begin{proof}
 By Proposition \ref{prop:Splitting by covering map}, each of the coverings to the genus 2-origamis induces a rank 4-subspace invariant under some finite index subgroup $\Gamma$ (we can take $\Gamma = \Gamma(2)$) of $\Aff(\Qmod9)$. The pullback of the uniformizing representation is the uniformizing representation upstairs. Furthermore a computation shows that the pullbacks of the non-uniformizing representations are distinct, whence two rank 2-representations $\rho_{\Qmod9,2}$ and $\rho_{\Qmod9, 3}$. Both being pulled back from origamis in $\Stratum_2(2)$, they have Lyapunov exponent $1/3$. As in the first example, the third Lyapunov exponent can be shown to be also $1/3$.
\end{proof}
\par 
\begin{rem} The representation, although not induced via a Veech covering from genus $2$, is not very far from the representation $\rho_{\L22,2}$. More precisely, $\rho_{\L22,2}$ is taken to
$\rho_{\Qmod9,4}$ by the orientation-reversing outer automorphism 
\[\alpha: (T^2,S)\mapsto(T^{-2}S^{-1},S)\]
of $\Gamma_{\Theta}$, \ie $\rho_{\L22,2}\circ \alpha = \rho_{\Qmod9,4}$.
\end{rem}

\section{Modular embeddings of rank 2}\label{sec:period data}
%%%%%%%%%%%%%%%%%%%%%%%%%%%%%%%%%%%%%%%%
% The motivation of this section comes from a theorem by Deligne, which can be stated in a special case as follows.
% \begin{prop}[{\cite{delfinitude}}]
% For a fixed algebraic curve $C$ and $N\in \NN$, there are only finitely many isomorphism classes of irreducible representations 
% of $\pi_1(C,c)$ of rank $N$ occuring as a direct factor in monodromy representations of families of curves $\phi:\XFamily\to C$.
% \end{prop}
% [Coefficients? Deligne says, $\QQ$, but also $\ZZ$ possible... what about $\CC$ or $\RR$?
% klar: Jede irred. Q-Darstellung zerfällt in endlich viele C-Darstellungen => nur endlich viele 
% versch. C-Darstellungen

As we have seen in Section \ref{sec:background}, a variation of Hodge structures can equally well be 
described as a group homomorphism $\rho$ plus the $\rho$-equivariant period map.
In this section, we study these objects from an abstract point of view and we
exhibit rigidity properties.
\par
In the following, let $G= \PSL_2(\RR)$.
\begin{defn}
A \textit{modular embedding} (of rank 2 and weight 1) is a pair $(p,\rho)$, where
\begin{enumerate}[(i)]
 \item $\rho:\Gamma \to G$ is a
group homomorphism from a 
lattice $\Gamma\subgp G$.
% group $\Gamma$ acting cofinitely and holomorphically on $\HH$
%  with finite kernel\todo{needed? Yes: only affine group acts on cohomology, not Veech group.
% finite kernel needed for commensurability question}
 \item $p:\HH\to \HH$ is a $\rho$-equivariant holomorphic
map.
\end{enumerate}
\end{defn}
\par
% \par
% \begin{defn}
% A \textit{period datum} (of rank 2 and weight 1) is a pair $(p,\rho)$, where
% \begin{enumerate}[(i)]
%  \item $\rho:\Gamma \to \SL_2(\RR)$ is a
% group homomorphism from a group $\Gamma =\dom(\rho)$ acting cofinitely by isometries on $\HH$ with finite kernel
%  \item $p:\HH\to \HH$ is a $\rho$-equivariant holomorphic
% map.
% \end{enumerate}
\begin{defn}
For a modular embedding $(p,\rho)$, denote by $\dom(\rho)$ the domain and by $\range(\rho)$ the image 
of $\rho$. We call a modular embedding \textit{discrete} if $\range(\rho)$ is a discrete subgroup of 
$G$ 
% $\SL_2(\RR)$
, and \textit{cofinite} if 
$\range(\rho)$ acts discretely and
cofinitely on $\HH$. 
If $\range(\rho)$ acts cofinitely, then $p$ descends to a holomorphic map between the quotients, which we denote by $\ol{p}$.
\end{defn}
\par
Note also that we allow $\Gamma$ to contain torsion elements in order to handle
 the orbifold case.
%  In particular, a modular embedding defines a VHS only on an appropriate
% finite cover $\HH/\Gamma'$.
\par
\begin{rem}
As in the proof of Theorem~\ref{main thm}, one shows that if a modular embedding is
 discrete, then it is either constant (\ie $p$ is constant) or cofinite.
\end{rem}
\par
Examples of modular embeddings come from Teichm\"uller curves.
Apart from the examples given above, there are prominent ones in 
$\Moduli_2$ discovered in \cite{mcmullenbild}. 
Here, $\SL(X,\omega)$ injects into $\SL_2(\mathfrak{o}_D)$ 
for some order $\mathfrak{o}_D$ in a totally real 
quadratic number field $\QQ(\sqrt{D})$. The VHS splits into two sub-VHS of rank 2, and the
period map of the non-uniformizing sub-VHS, together with the representation of $\SL(X,\omega)\isom \Aff(X,\omega)$ given by 
Galois conjugation give rise to a modular embedding. Other examples related to these
 are the twisted Teichm\"uller curves studied in \cite{weiss}. 
% maybe in the future: add: if $p$ is iso then this is the case of HZ-cycles}
\par
%******************
\subsection{Rigidity}
%******************
In this section, we gather results on how much the two data $\rho$ and $p$ of a modular embedding determine each other.
\par
If $p$ is non-constant, it is easy to see that the representation of a modular embedding $(p,\rho)$ is uniquely determined by $p$.
% as a representation to $\PSL_2(\RR)$.
Conversely, the period map is also uniquely determined by the representation. This has already been remarked by McMullen \cite[Section 10]{mcmullenbild}, and can in fact be generalized to ball quotients \cite[Theorem 5.4]{KapMoe12}. We recall the arguments for the convenience of the reader.
\par
\begin{prop}\label{prop:uniqueness of period map}
%  Let $\Gamma$ be a cofinite Fuchsian group and let $\rho:\Gamma\to \SL_2(\RR)$ be a group homomorphism such that $\rho(\Gamma) \nsubseteq \{\pm I\}$. 
Given a non-trivial group homomorphism $\rho:\Gamma \to G$ from a cofinite Fuchsian group $\Gamma$, there exists
 at most one map $p:\HH\to\HH$ such that $(p,\rho)$ is a modular embedding.
\end{prop}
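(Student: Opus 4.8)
The plan is to suppose there are two $\rho$-equivariant holomorphic maps $p_1, p_2: \HH \to \HH$ and show $p_1 = p_2$. The key point is that $\rho$ is non-trivial, so $\range(\rho)$ contains an element $g$ of infinite order (it is either a non-trivial element of a torsion-free finite-index subgroup, or one can argue directly that a finite image forces $p$ constant, contradicting non-triviality once we also dispose of that case). First I would handle the degenerate possibility: if $\range(\rho)$ is finite, then both $p_1$ and $p_2$ are constant maps whose value is a common fixed point of the finite group $\range(\rho)$ in $\HH$; such a fixed point is unique (the fixed-point set of a finite subgroup of $\PSL_2(\RR)$ acting on $\HH$ is a single point), so $p_1 = p_2$. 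Thus we may assume $\range(\rho)$ is infinite.

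Next, pick $\gamma \in \Gamma$ with $h := \rho(\gamma)$ of infinite order, so $h$ is either hyperbolic or parabolic. Consider $f = p_2 \circ p_1^{-1}$ only formally; more robustly, I would look at the two maps restricted along the axis or the horocycle of $h$. Equivariance gives $p_i(\gamma z) = h\, p_i(z)$ for both $i$. Conjugating in $\PSL_2(\RR)$, normalize so that $\gamma$ fixes $\infty$ (and $0$ if hyperbolic), and $h$ likewise fixes $\infty$: then $h(w) = w + \lambda$ (parabolic case) or $h(w) = \mu w$ (hyperbolic case) with $\lambda \in \RR \setminus \{0\}$, resp. $\mu > 0$, $\mu \neq 1$. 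The functions $p_1, p_2$ then satisfy the same functional equation $p_i(\gamma z) = h p_i(z)$. Passing to the quotient by the cyclic group $\langle \gamma \rangle$ on the source and $\langle h\rangle$ on the target — which by the Schwarz-lemma argument in the proof of Lemma~\ref{lem:compute-degree} extends the induced maps holomorphically across the puncture — both $p_1$ and $p_2$ descend to holomorphic maps $\DD^* \to \DD^*$ extending to $\DD \to \DD$, both sending $0$ to $0$.

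The main obstacle, and the crux of rigidity, is to upgrade agreement ``near a cusp/axis'' to agreement everywhere. For this I would use that $\Gamma$ acts cofinitely, hence the orbit of the fixed point(s) of $\gamma$ is dense in $\partial\HH$ (limit set is all of $\partial\HH$), combined with equivariance: if $p_1$ and $p_2$ agree at one boundary point that is the attracting fixed point $\xi$ of $\gamma$, then for every $\delta \in \Gamma$ they agree at $\delta\xi$, since $p_i(\delta\xi) = \rho(\delta) p_i(\xi)$. So it suffices to show $p_1$ and $p_2$ have the same radial limit at $\xi$. This follows from the functional equation: iterating $p_i(\gamma^n z) = h^n p_i(z)$ and using that $\gamma^n z \to \xi$ while $h^n$ contracts toward the fixed point of $h$, one reads off that the boundary value of $p_i$ at $\xi$ is forced to be the attracting fixed point of $h$ — independent of $i$. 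Hence $p_1 = p_2$ on the dense set $\Gamma \xi \subseteq \partial \HH$, and two holomorphic self-maps of $\HH$ with the same (non-tangential) boundary values on a set of positive measure — indeed on a dense set, invoking a boundary uniqueness / Lindelöf-type theorem for bounded holomorphic functions on $\DD$ — must coincide. I expect the delicate step to be making the boundary-value comparison rigorous (ensuring the radial limits exist and handling the parabolic case, where the contraction is only polynomial rather than exponential); once that is in place, density of $\Gamma\xi$ and the identity theorem finish the argument.
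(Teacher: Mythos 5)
Your overall strategy (equivariance plus boundary values) is the right one, but the final step has a gap the argument cannot survive as written. The set $\Gamma\xi$ on which you establish agreement of non-tangential boundary values is countable, hence Lebesgue-null in $\partial\HH$, and there is no uniqueness theorem for bounded holomorphic functions from boundary data on a measure-zero set, however dense: the singular inner function attached to the atomic measure $\sum_n 2^{-n}\delta_{\zeta_n}$ is a non-zero bounded holomorphic function on $\DD$ whose non-tangential limit is $0$ at every point of the countable dense set $\{\zeta_n\}$. Privalov's (or the Riesz brothers') theorem, which is what you would need to conclude $p_1-p_2\equiv 0$, requires agreement on a set of \emph{positive measure}. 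The paper obtains exactly this by choosing a different boundary set: a lattice is of divergence type, so the conical limit points of $\Gamma$ have full measure in $\partial\DD$; for almost every such $\zeta$ the limits $p_i^*(\zeta)=\lim_k p_i(\gamma_k x_0)$ along an approximating orbit exist and lie on $\partial\DD$, and the identity $d_{\DD}\bigl(p_1(\gamma_k x_0),p_2(\gamma_k x_0)\bigr)=d_{\DD}\bigl(p_1(x_0),p_2(x_0)\bigr)$ forces the two boundary points to coincide almost everywhere, after which the uniqueness theorem applies. Note that this version never needs to know what $\rho(\gamma_k)$ is.

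A second, smaller problem: you assert that $h=\rho(\gamma)$ of infinite order must be hyperbolic or parabolic. Since the definition of a modular embedding does not require $\range(\rho)$ to be discrete, $h$ may be elliptic of infinite order (an irrational rotation); then $h^np_i(z)$ stays on a compact circle in $\HH$, the descent to $\DD^*\to\DD^*$ fails on the target side, and $p_i$ has no boundary limit at $\xi$ along the orbit. This case is not vacuous and would need separate treatment (if every element of $\range(\rho)$ is elliptic, then $\range(\rho)$ fixes a point $c\in\HH$ and a Liouville-type argument shows $p\equiv c$). Your handling of the finite-image case and the equivariant propagation of boundary values are fine in outline; the argument just needs to be anchored on the full-measure set of conical limit points rather than on a single countable orbit.
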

\par
\begin{proof}
We work in the unit disk model and use arguments displayed in \cite[Section 2]{Shiga}. Since $\Gamma$ is a lattice, it is of divergence type. Therefore the set of points $E$ in $\partial \DD = S^1$, which can be approximated by a sequence $(\gamma_k(x_0))_k \subset \Gamma$ (for some $x_0\in \DD$) that stays in an angular sector, is of full Lebesgue-measure in $\RR\cup \{\infty\}$.
For a holomorphic map $p: \DD\to \DD$ define $p^*(\zeta)$ of a point of approximation $\zeta\in E$ by $\lim_k p(\gamma_k(x_0))$ for a sequence $\gamma_k(x_0) \to \zeta$. This is well-defined for almost all $\zeta$ and $p^*(\zeta)\in \partial \DD$ for almost all $\zeta$ by \cite[Lemma 2.2]{Shiga}.
\par
Now suppose we are given two $\rho$-equivariant maps $p_i$, $i=1,2$. Pick a point $x_0 \in \DD$. If $p_1$ is constant then $\rho(\Gamma)$ lies in the stabilizer of $p_1(x_0)$. By equivariance, $p_2(y)$ is stabilized by $\rho(\Gamma)$ for any $y\in \DD$. Since $\rho$ is non-trivial, $p_1=p_2$. Thus we are left with the case that $p_1$, $p_2$ are non-constant. Then for all $k$
\[d_{\DD}(p_1(x_0),p_2(x_0)) = d_{\DD}( p_1(\gamma_kx_0), p_2(\gamma_kx_0))\]
and since $p_i(\gamma_kx_0) \to \partial \DD$, this means that $p_1^*(\zeta) = p_2^*(\zeta)$ for $\zeta$ in a set of full measure of $\partial \DD$. Thus $(p_1-p_2)^* = p_1^*-p_2^* \equiv 0$ and therefore $p_1 = p_2$.
\end{proof}
\par
If $(p,\rho)$ is cofinite, then it determines a map $\ol{p}:\HH/\dom(\rho) \to \HH/\range(\rho)$. Conversely, a map $\ol{p}$ between the quotients gives rise to a modular embedding as the following lemma shows. Thus there are in some sense many modular embeddings.
\par
\begin{lem}\label{lem:given pbar-rho exists}
 Let $\bar{p} : \HH/\Gamma \to \HH/\Delta$ be a non-constant holomorphic map
between finite-area Riemann surfaces. Denote $u_{\Delta}:\HH\to \HH/\Delta$ the
canonical projection, and let $z\in \bar{p}(\HH/\Gamma)$. 
If $\Delta\subset
% \PSL_2(\RR)$ 
G$
acts freely on $\HH$, then $\bar{p}$ lifts to a holomorphic map
$p:\HH\to\HH$, unique up to the choice of a point $\tilde z\in u^{-1}(z)$, and
there is a unique group homomorphism
\[\rho:\Gamma \to \Delta\]
such that $p$ is $\rho$-equivariant.
\end{lem}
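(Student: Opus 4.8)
The plan is to realize $p$ as a lift of $\bar{p}$ along the covering map $u_\Delta$ and to read off $\rho$ from the deck group. Since $\Delta$ acts freely and properly discontinuously on $\HH$ by biholomorphisms, $u_\Delta:\HH\to\HH/\Delta$ is an unramified holomorphic covering map -- in fact a universal covering, as $\HH$ is simply connected -- and its group of deck transformations is precisely $\Delta$ acting on $\HH$. Fix a point $b\in\HH/\Gamma$ with $\bar{p}(b)=z$ and a preimage $\tilde b\in u_\Gamma^{-1}(b)$, where $u_\Gamma:\HH\to\HH/\Gamma$ is the quotient map; note that $u_\Gamma$ need not be a covering when $\Gamma$ has torsion, but we will only lift along $u_\Delta$, so this plays no role.

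First I would produce $p$. The composition $f:=\bar{p}\circ u_\Gamma:\HH\to\HH/\Delta$ is holomorphic, and since $\HH$ is simply connected the lifting criterion for covering spaces gives, for each chosen $\tilde z\in u_\Delta^{-1}(z)$, a unique continuous $p:\HH\to\HH$ with $u_\Delta\circ p=f$ and $p(\tilde b)=\tilde z$; it is holomorphic because $u_\Delta$ is a local biholomorphism. The identity $u_\Delta\circ p=\bar{p}\circ u_\Gamma$ is the precise sense in which $p$ lifts $\bar{p}$. Any two lifts of $f$ along $u_\Delta$ over the connected base $\HH$ differ by a deck transformation, i.e.\ by an element of $\Delta$, so the set of lifts is $\{\delta\circ p\mid\delta\in\Delta\}$, and since $\Delta$ acts freely this set is in bijection with the fibre $u_\Delta^{-1}(z)$. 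A short equivariance computation (carried out at the end) shows that this set is independent of the auxiliary choice of $\tilde b$, so $p$ is unique up to the choice of $\tilde z\in u_\Delta^{-1}(z)$, as asserted.

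It remains to build $\rho$. For $\gamma\in\Gamma$ one has $u_\Gamma\circ\gamma=u_\Gamma$, hence $u_\Delta\circ(p\circ\gamma)=f$, so $p\circ\gamma$ is again a lift of $f$ and therefore $p\circ\gamma=\rho(\gamma)\circ p$ for a \emph{unique} $\rho(\gamma)\in\Delta$ -- uniqueness here uses that a deck transformation in $\Delta$ is determined by its value at the single point $p(\tilde b)$, which is exactly where freeness of $\Delta$ enters. That $\rho$ is a homomorphism follows from
\[\rho(\gamma_1\gamma_2)\circ p=p\circ\gamma_1\gamma_2=(p\circ\gamma_1)\circ\gamma_2=\rho(\gamma_1)\circ p\circ\gamma_2=\rho(\gamma_1)\rho(\gamma_2)\circ p\]
together with uniqueness, and the same argument gives uniqueness of $\rho$: if $p$ is also $\rho'$-equivariant then $\rho'(\gamma)$ and $\rho(\gamma)$ agree at $p(\tilde b)$, hence coincide. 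Since $\Gamma$ is a lattice in $G$ and $\rho$ maps into $\Delta\subseteq G$, the pair $(p,\rho)$ is a modular embedding. Finally, replacing $\tilde b$ by $\gamma\tilde b$ replaces $p$ by $\rho(\gamma)^{-1}\circ p$, which is one of the lifts already listed, settling the equivariance check used in the previous paragraph.

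I do not expect a genuine obstacle: this is a routine application of covering-space theory. The two points needing care are (a) checking that $u_\Delta$ really is an honest unramified covering -- which is exactly where the hypothesis that $\Delta$ acts freely is used, so that the lifting and deck-transformation machinery applies verbatim -- and (b) keeping track of the fact that the ambiguity in $p$ is precisely the fibre $u_\Delta^{-1}(z)$ and nothing depending on the choice of $\tilde b$, which is handled by the equivariance bookkeeping above. Torsion in $\Gamma$ is harmless, since we never lift along $u_\Gamma$ and the $\pi_1$-obstruction to lifting $f$ along $u_\Delta$ vanishes because $\HH$ is simply connected.
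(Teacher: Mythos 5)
Your proof is correct and follows essentially the same route as the paper: lift $\bar{p}\circ u_\Gamma$ along the covering $u_\Delta$ using simple connectivity of $\HH$, then define $\rho(\gamma)$ as the unique element of $\Delta$ relating the lifts $p\circ\gamma$ and $p$, with freeness of the $\Delta$-action giving uniqueness and hence the homomorphism property. The only cosmetic difference is that you invoke the standard uniqueness-of-lifts/deck-transformation statement to get the global identity $p\circ\gamma=\rho(\gamma)\circ p$ at once, whereas the paper proves this by hand via a locally-constant-along-paths argument; the content is the same.
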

\par
We suspect that this statement is well-known, but we are not aware of a source. We supply a proof for the convenience of the reader.
\par
\begin{proof}
 The first claim follows from $u_{\Delta}$ being a covering map.
\begin{draftcomment}
 We can lift $\bar{p}\circ u_\Gamma:\HH\to \HH/\Delta$ to $p:\HH\to\HH$,
 since $\HH$ is simply connected.
\end{draftcomment}
 As to the second claim, note that $\Delta$ acts freely and transitively on
$u_{\Delta}^{-1}(z) = \Delta \cdot \tilde z$. Let $y\in\bar{p}^{-1}(z)$, and let
$\tilde y\in u_{\Gamma}^{-1}(y)$ with $p(\tilde y) = \tilde z$. (This fixes
$p$.) For any $\gamma\in \Gamma$, there is by assumption a unique
$\delta_{\gamma,\tilde y}\in \Delta$ such that \[p(\gamma \tilde y) = \delta_{\gamma,\tilde y}
p(\tilde y).\]
Thus we can define a map $\rho:\Gamma \to \Delta, \rho(\gamma) =
\delta_{\gamma,\tilde y}$. To check that $\rho$ is a group homomorphism, we first show
that if $c:[0,1]\to \HH$ is a path starting at $\tilde y$, then $p(\gamma c(t))
= \delta_{\gamma,\tilde y} p(c(t))$ for all $t$. We know
that for each $t$ there is $\delta_{\gamma,c(t)}$ such that
$p(\gamma c(t)) = \delta_{\gamma,c(t)} p(c(t))$. On the other hand, we claim that the assignment $t\to\delta_{\gamma,c(t)}$ is 
locally constant, and hence constant since $[0,1]$ is connected: Each $\tilde x\in \HH$ has a neighborhood $U$ 
such that for all $\tilde w\in U$, $p(\gamma \tilde w) = \delta' p(\tilde w)$ only holds for $\delta' = \delta_{\gamma,\tilde x}$.
Indeed, it suffices to take $U = p^{-1}(V)$, where $V$ is a neighborhood of $p(\tilde x)$ such that
$\delta V \cap V = \emptyset$ for all $\delta\in \Delta$, $\delta \neq \id$.
% Let $s,t\in [0,1]$. Then
% \[|\delta_t(p(c(t)) - \delta_s(p(c(t))| \leq |p(\gamma c(t)) - p(\gamma c(s))|
% + |\delta_s(p\circ c(s)) - \delta_s(p\circ c(t))|.\]
% As $s\to t$, the expression on the right tends to $0$. 
% why this??	
% By discreteness of the
% orbit $\delta_s(p\circ c(t)) = \delta_t(p\circ c(t))$ for $|t-s|$ small. 
% Since
% the action is free, $\delta_s = \delta_t$ in a neighborhood of $t$. 
% Thus, the
% assignment $t\to \delta_t$ is locally constant, hence constant since $[0,1]$ is
% connected.
\par
This shows that $p(\gamma \gamma' \tilde y) = \delta_{\gamma,\tilde y} p(\gamma' \tilde y)$
for $\gamma,\gamma'\in \Gamma$: we take $c$ to be a path connecting $\tilde y$
and $\gamma'\tilde y$. Thus we have
\[p(\gamma\gamma' \tilde y) = \delta_{\gamma,\tilde y} p(\gamma'\tilde y) = \delta_{\gamma,\tilde y}
\delta_{\gamma',\tilde y} p(\tilde y) = \delta_{\gamma\gamma',\tilde y}p(\tilde y)\]
by uniqueness. The uniqueness of $\rho$ follows directly from the construction.
\end{proof}
\par
\begin{rem}
Given a modular embedding, we can consider the case when one of the two items is an isomorphism.
If $p = A\in G$ is a M\"obius transformation, then clearly $\rho$ is conjugation by $A$. Conversely, suppose 
$\rho$ is an isomorphism. If $(p,\rho)$ is cofinite, then after passing to a finite index subgroup, we can suppose that
$\Gamma$ is torsionfree and that $g(\HH/\Gamma)>1$. Then $\ol{p}:\HH/\Gamma \to \HH/\rho(\Gamma)$ must have degree $1$ by
the Riemann-Hurwitz formula, and hence $p$ is an isomorphism. If $(p,\rho)$ is not cofinite, it may well happen however that
$\rho$ is an isomorphism without $p$ being one. Examples are provided by Teichm\"uller curves in $g=2$ for non-square discriminants 
where $\rho$ is induced by Galois conjugation, but $p$ is not an isometry (see \cite[Theorem 4.2]{mcmullenbild}).
\end{rem}
\par
Using the previous lemma, we can now pick up the discussion from the introduction and show that every rational number in $[0,1]$ is the Lyapunov exponent of a family of elliptic curves.
\par
\begin{prop}\label{prop: every ratl is lyap}
 For any rational number $0\leq \lambda\leq 1$, there is a family of elliptic curves $\phi:\XFamily\to \HH/\Delta$ such that $\lambda$ is in the Lyapunov spectrum of its VHS.
\end{prop}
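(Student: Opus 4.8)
The plan is to realize $\lambda$ by pulling back the universal family of elliptic curves along a suitable branched covering and to evaluate the resulting exponent with Proposition~\ref{prop:Rank2-formula}; Lemma~\ref{lem:given pbar-rho exists} is the device that upgrades a branched covering of Riemann surfaces to a modular embedding. The case $\lambda=0$ is trivial: for any torsionfree finite index subgroup $\Delta\subseteq\SL_2(\ZZ)$ the constant family $\XFamily=E_0\times(\HH/\Delta)\to\HH/\Delta$ has a VHS with trivial monodromy, so $\lambda=0$ by Theorem~\ref{main thm}. From now on assume $0<\lambda\le1$ and write $\lambda=a/b$ in lowest terms with $1\le a\le b$.

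Fix once and for all a torsionfree finite index subgroup $\Delta_0\subseteq\SL_2(\ZZ)$ with $\HH/\Delta_0$ non-compact (for instance $\Delta_0=\Gamma(3)$), and set $V_0:=2g_0-2+s_0>0$, where $g_0$ and $s_0$ are the genus and number of cusps of $\HH/\Delta_0$. By the Gau\ss--Bonnet formula there is a constant $c>0$ with $\vol(\HH/\Gamma)=c(2g-2+s)$ for every non-cocompact lattice $\Gamma$, $g$, $s$ the genus and number of cusps of $\HH/\Gamma$; in particular $\vol(\HH/\Delta_0)=cV_0$. The core step is to construct a connected degree-$2a$ branched covering $\pi\colon\Sigma'\to\HH/\Delta_0$ of Riemann surfaces with total ramification exactly $R=2(b-a)V_0$. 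Granting this, Riemann--Hurwitz gives $-\chi(\Sigma')=2aV_0+R=2bV_0$, so $\Sigma'$ is again a non-compact finite-area hyperbolic surface, say $\Sigma'=\HH/\Delta'$ with $\Delta'$ a torsionfree non-cocompact lattice, and $\vol(\HH/\Delta')=2cbV_0$. Such a $\pi$ exists by the Riemann existence theorem: because $\HH/\Delta_0$ is non-compact, the fundamental group of the complement in it of any finite point set is free, so the local monodromies around the branch points can be prescribed freely --- spread the ramification $R$ over $\lceil R/(2a-1)\rceil$ branch points (a branch point carrying a permutation of $S_{2a}$ with $2a-k$ cycles contributes $k$ to $R$, $1\le k\le 2a-1$), sending in addition a spare free generator to a $2a$-cycle to make the monodromy transitive and hence $\Sigma'$ connected; if $R=0$ this is just an unramified degree-$2a$ cover.

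Now apply Lemma~\ref{lem:given pbar-rho exists} with $\ol p:=\pi$, which is legitimate since $\Delta_0$ acts freely on $\HH$: we obtain a holomorphic map $q\colon\HH\to\HH$ and a homomorphism $\sigma\colon\Delta'\to\Delta_0\subseteq\SL_2(\ZZ)$ for which $(q,\sigma)$ is a cofinite modular embedding. As in the proof of Theorem~\ref{main thm}, $q$ is non-constant (else $\pi$ would be), hence $\sigma$ is non-trivial, hence $\Delta'':=\sigma(\Delta')$ is an infinite torsionfree discrete subgroup of $\SL_2(\ZZ)$; furthermore $q$ descends to $\ol{q'}\colon\HH/\Delta'\to\HH/\Delta''$ and $\pi=\iota\circ\ol{q'}$, where $\iota\colon\HH/\Delta''\to\HH/\Delta_0$ is the natural covering of degree $[\Delta_0:\Delta'']\le 2a$, so $\Delta''$ has finite index in $\Delta_0$ and is cofinite. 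Since $\Delta''\subseteq\SL_2(\ZZ)$, the construction of Section~\ref{sec:background} attaches to $(q,\sigma)$ a family of elliptic curves $\phi\colon\XFamily\to\HH/\Delta'$ whose rank-$2$ $\RR$-VHS $\VLocal$ is the pullback under $\ol{q'}$ of the universal one on $\HH/\Delta''$. Proposition~\ref{prop:Rank2-formula} (applied to $(q,\sigma)$), combined with $\deg(\pi)=[\Delta_0:\Delta'']\deg(\ol{q'})$ and $\vol(\HH/\Delta'')=[\Delta_0:\Delta'']\vol(\HH/\Delta_0)$, then yields
\[
\lambda(\VLocal)=\frac{\deg(\ol{q'})\vol(\HH/\Delta'')}{\vol(\HH/\Delta')}
=\frac{\deg(\pi)\vol(\HH/\Delta_0)}{\vol(\HH/\Delta')}
=\frac{2a\cdot cV_0}{2cbV_0}=\frac ab=\lambda.
\]
Since the Lyapunov spectrum of $\VLocal$ is $\{\lambda,-\lambda\}$, the proposition follows.

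I expect the main obstacle to be the construction of the branched covering with the prescribed degree and ramification profile, together with the bookkeeping that guarantees $\Delta'$ torsionfree and non-cocompact, $\Delta''$ of finite index in $\Delta_0$, and the factorization $\pi=\iota\circ\ol{q'}$, so that Proposition~\ref{prop:Rank2-formula} applies verbatim; it is precisely the non-compactness of $\HH/\Delta_0$ (hence the freeness of the relevant surface groups) that makes the Hurwitz realizability problem unobstructed.
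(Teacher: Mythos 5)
Your proof is correct and follows essentially the same route as the paper: construct a branched cover of a modular curve with prescribed degree and ramification (possible because the fundamental group of a punctured surface is free), upgrade it to a modular embedding via Lemma~\ref{lem:given pbar-rho exists}, and evaluate the exponent with Proposition~\ref{prop:Rank2-formula} together with Riemann--Hurwitz. The only differences are cosmetic --- you base the construction on $\Gamma(3)$ (which lifts canonically to $\SL_2(\ZZ)$ and so avoids the paper's detour through $\Gamma(2)^+$) and you target $a/b$ directly by fixing the degree $2a$ and total ramification $2(b-a)V_0$, whereas the paper parametrizes the achievable values of $\lambda^{-1}$ over all degrees and ramification profiles.
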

\begin{proof}
Let $\Gamma(2) = \ker(\SL_2(\ZZ)\to \SL_2(\ZZ/(2)))$ and let $P\Gamma(2)$ be its projection to $\PSL_2(\RR)$. We construct a holomorphic map 
\[\ol{p}: X\to \HH/P\Gamma(2) \isom \PP^1\setminus \{0,1,\infty\}\]
of degree $d$ from a Riemann surface $X$ by specifying a monodromy.
The map $p$ should be ramified over the cusps and over $r$ interior points $x_1,\dots,x_r$ 
in such a way that the associated covering is connected and $|\ol{p}^{-1}(x_i)| = t_i$. We can surely 
find such a monodromy 
\[\sigma :\pi_1(\PP^1\setminus \{0,1,\infty,x_1,\dots,x_r\}) \to S_d,\]
 since the fundamental group is free of rank $r+2$ (to guarantee connectedness, we can take $\ol{p}$ to be totally ramified over $\infty$).
\par
Next we choose a lattice $\Delta \subgp \PSL_2(\RR)$ such that $X \isom \HH/\Delta$. 
Since $P\Gamma(2)$ is torsionfree, we obtain a group homomorphism $\rho:\Delta\to P\Gamma(2)$ by Lemma \ref{lem:given pbar-rho exists}. We can lift this homomorphism to $\tilde\rho:\Delta \to \Gamma(2)^+$, where $\Gamma(2)^+$ is the group generated by $\textmatrix{1}{2}{0}{1}$ and $\textmatrix{1}{0}{2}{1}$, an index $2$ subgroup in $\Gamma(2)$: for $a\in \Delta$ we let $\tilde\rho(a)$ be the unique lift of $\rho(a)$ to $\SL_2(\RR)$ that is in $\Gamma(2)^+$.
\par
Let $p$ be the lift of $\ol{p}$. The pair $(p,\tilde \rho)$ is then a modular embedding, and the associated VHS is the VHS of a family $\phi:\XFamily\to \HH/\Delta$ of elliptic curves. In fact, $\phi$ is the pullback via $p$ of the universal family over $\HH$. By Proposition \ref{prop:Rank2-formula}, its sole non-negative Lyapunov exponent is given by
\[\lambda = \frac{\deg(\ol{p})\vol(\HH/\Gamma(2))}{\vol(\HH/\Delta)} = \frac{\deg(\ol{p})\chi(\HH/\Gamma(2))}{\chi(\HH/\Delta)}.\]
We have $\chi(\HH/\Gamma(2)) = -1$ and 
$\chi(\HH/\Delta)$ and $d = \deg(p)$ are related by the Riemann-Hurwitz formula
\[-\chi(\HH/\Delta) = 2g(\HH/\Delta) - 2 + s(\Delta) = d + \sum_{i=1}^r d-t_i = d(r+1) - \sum_{i=1}^r t_i,\]
where $s(\Delta)$ is the number of cusps of $\Delta$. Therefore,
\[\lambda = \biggl(r+1 - \frac{\sum_i t_i}{d}\biggr)^{-1}\]
where $t_i\in \{1,\dots,d\}$. For fixed $r,d$, the possible values of $\lambda^{-1}$ are
\[\set{r+1 - \frac{l}{d}}{l = r,r+1,\dots,rd}.\]
Letting $r$ and $d$ vary, we can thus obtain every rational number $\geq 1$. Hence every $\lambda \in \QQ\cap (0,1]$ can be realized as Lyapunov exponent. Finally, $\lambda = 0$ is the Lyapunov exponent of a constant family of elliptic curves. 
\end{proof}
\par
%***********************************************************
\subsection{Commensurability and Lyapunov exponents}
%***********************************************************
We define (weak) commensurability of two modular embeddings and show that the Lyapunov exponent of a modular embedding is a weak commensurability invariant. Further, we define the commensurator $\Comm(p,\rho)$ of a modular embedding in analogy to the usual commensurator. If $\rho$ has a non-trivial kernel, we show that $\dom(\rho)$ is of finite-index in $\Comm(p,\rho)$.
% and discuss its relationship to the commensurator of $\dom(\rho)$. We exhibit an analogy with Margulis' commensurability critertion which states that arithmetic lattices are characterized by having infinite index in their commensurator. Here the difference is that we find modular embeddings such that $\dom(\rho)$ is an arithmetic lattice, but is a finite index subgroup of $\Comm(p,\rho)$.
\par
\begin{defn}
Two period data $(p_i,\rho_i)$, $i=1,2$ are \textit{commensurable} if 
there exists $\Gamma'\subgp G$, which is a subgroup of finite index in $\Gamma_i = \dom(\rho_i)$ for 
$i=1,2$, such that $\rho_1 = \rho_2$ on $\Gamma'$. 
\end{defn}
\par
As is easily seen, commensurability is an equivalence relation. Note that by Proposition \ref{prop:uniqueness of period map}, $p_1 = p_2$ for commensurable period data.
%rho1 = rho2 auf Durschnitt stimmt nur modulo \pm I!
\par
There is a left action of $G\times G$ on modular embeddings. For $(g,h)\in G\times G$ 
and a modular embedding $(p,\rho)$,
\[g(p,\rho)h^{-1} := (g\circ p\circ h^{-1}, c_g\circ \rho \circ c_{h^{-1}}),\]
where $c_g: G\to G,\ \tilde g \mapsto g\tilde g g^{-1}$ is the action of an element $g\in G$ by conjugation.
\begin{defn}
We call two modular embeddings $(p_i,\rho_i)$, ($i=1,2$) \textit{weakly commensurable} if they become
commensurable under this action, \ie if there exist $(g,h)\in G\times G$ and $\Gamma'$, a subgroup of finite index in both
$\Gamma_1$ and $h\Gamma_2 h^{-1}$, such that $\rho_1 = c_g\circ \rho_2 \circ c_{h^{-1}}$ on $\Gamma'$. 
\end{defn}
\par
%  There is an
% action of $G$ from the left on $(p,\rho)$: for $g\in \SL_2(\RR)$, we
% send $(p,\rho)$ to $(g\circ p, c_g\circ \rho)$, where $c_g:h\mapsto ghg^{-1}$ is
% conjugation by $g$. There is also an action by $\SL_2(\RR)$ from the right on
% $(p,\rho)$: an element $h\in\SL_2(\RR)$ sends $(p,\rho)$ to $(p\circ h^{-1},
% \rho\circ c_{h^{-1}})$. 
\begin{exa}
 Clearly, the two modular embeddings from Section \ref{sec:examples} are not commensurable, since otherwise they would agree on $\gen{T^{2m}}$ for some $m\in \NN$, but $\rho_{\L22,2}(T^2)$ is parabolic whereas $\rho_{\Qmod9,4}(T^2)$ is elliptic.
\par 
 Moreover, for no two matrices $(g,h)\in \SL_2(\ZZ)^2$ is $g(p_{\L22,2},\rho_{\L22,2})h^{-1}$ commensurable with $(p_{\Qmod9,4},\rho_{\Qmod9,4})$, since conjugation by $h$ cannot exchange the cusps, as they are of different width, and conjugation by $g$ preserves the type (parabolic, respectively elliptic) of the image of a parabolic.
\par
 It remains to decide whether the two modular embeddings are not weakly commensurable, and more generally whether $(p_{\Qmod9,4},\rho_{\Qmod9,4})$ is (weakly) commensurable to a non-uniformizing representation of an arithmetic Teichm\"uller curve in $\Stratum_2(2)$ (\ie one generated by a square-tiled surface). However, we can exclude that $\rho_{\Qmod9,4}$ is weakly commensurable to a non-uniformizing 
representation of an arithmetic Teichm\"uller curve in $\Stratum_2(1,1)$. This is a consequence of the following discussion and the fact that such curves have non-negative Lyapunov spectrum $1,\tfrac1{2}$.
\end{exa}
\par
\begin{defn}
If $(p,\rho)$ is a cofinite modular embedding, we define its Lyapunov exponent to be
\[\lambda(p,\rho) = \frac{\deg(\ol{p})\vol(\HH/\range(\rho))}{\vol(\HH/\dom(\rho))}.\]
\end{defn}
\par
This definition is justified by Theorem~\ref{main thm} in that if $\rho$ admits a lift to $\SL_2(\RR)$ and if $\dom(\rho)$ acts freely, we obtain a VHS with Lyapunov exponent $\lambda(p,\rho)$.
\par
\begin{prop}
 The Lyapunov exponent of a modular embedding is a weak commensurability invariant.
\end{prop}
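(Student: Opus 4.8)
The plan is to show that the three quantities $\deg(\ol{p})$, $\vol(\HH/\range(\rho))$, and $\vol(\HH/\dom(\rho))$ all scale in a controlled way under the two operations that generate weak commensurability: passing to a finite-index subgroup (on both sides simultaneously) and conjugating $p$ and $\rho$ by elements of $G$. Since weak commensurability is by definition generated by these moves together with the identification $\rho_1 = c_g \circ \rho_2 \circ c_{h^{-1}}$ on a common finite-index subgroup, it suffices to check invariance of $\lambda(p,\rho)$ under each move separately.

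First I would dispatch the conjugation move: if $(p',\rho') = g(p,\rho)h^{-1}$, then $\range(\rho') = g\,\range(\rho)\,g^{-1}$ and $\dom(\rho') = h\,\dom(\rho)\,h^{-1}$, so the quotients $\HH/\range(\rho')$ and $\HH/\dom(\rho')$ are isometric to $\HH/\range(\rho)$ and $\HH/\dom(\rho)$ respectively (conjugation by an element of $G = \PSL_2(\RR)$ is an isometry of $\HH$), and $\ol{p}' = \ol{g}\circ \ol{p}\circ \ol{h}^{-1}$ is $\ol{p}$ composed with two biholomorphisms, hence $\deg(\ol{p}') = \deg(\ol{p})$. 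So $\lambda$ is unchanged. Next I would handle the finite-index move, which is exactly the content of Lemma~\ref{lem:going up}: if $\Gamma' \subgp \Gamma = \dom(\rho)$ has finite index, then $\Delta' = \rho(\Gamma')$ has finite index in $\Delta = \range(\rho)$, and the commutative square of covering maps relating $\ol{p}':\HH/\Gamma' \to \HH/\Delta'$ to $\ol{p}:\HH/\Gamma\to\HH/\Delta$ together with multiplicativity of covering degrees and of volumes under finite covers gives $\deg(\ol{p}')\vol(\HH/\Delta')/\vol(\HH/\Gamma') = \deg(\ol{p})\vol(\HH/\Delta)/\vol(\HH/\Gamma)$. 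Lemma~\ref{lem:going up} states precisely this.

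Now combine: given two weakly commensurable cofinite modular embeddings $(p_1,\rho_1)$ and $(p_2,\rho_2)$, by definition there are $(g,h)\in G\times G$ and a common finite-index subgroup $\Gamma' \subgp \Gamma_1$ and $\Gamma' \subgp h\Gamma_2 h^{-1}$ on which $\rho_1 = c_g\circ\rho_2\circ c_{h^{-1}}$. Replacing $(p_2,\rho_2)$ by $g(p_2,\rho_2)h^{-1}$ changes nothing in $\lambda$ by the conjugation step, so we may assume $\rho_1 = \rho_2$ on $\Gamma' \subgp \Gamma_1 \cap \Gamma_2$, a finite-index subgroup of each. By Proposition~\ref{prop:uniqueness of period map}, $\rho_i$ (non-trivial, since the embeddings are cofinite hence $\rho_i$ has infinite, in particular non-trivial, image) determines $p_i$ uniquely once restricted to $\Gamma'$; applying this to the pair $(p_i|,\rho_i|_{\Gamma'})$ shows $p_1 = p_2$ and hence the two restricted modular embeddings literally coincide. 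Finally, apply the finite-index step (Lemma~\ref{lem:going up}) twice — once to pass from $(p_1,\rho_1)$ down to its restriction to $\Gamma'$, once to pass from $(p_2,\rho_2)$ down to its restriction to $\Gamma'$ — to conclude $\lambda(p_1,\rho_1) = \lambda(p_1|_{\Gamma'},\rho_1|_{\Gamma'}) = \lambda(p_2|_{\Gamma'},\rho_2|_{\Gamma'}) = \lambda(p_2,\rho_2)$.

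The only point requiring a little care — the "main obstacle", such as it is — is bookkeeping the interaction between the conjugation move and the finite-index move, i.e. making sure that after conjugating $(p_2,\rho_2)$ by $(g,h)$ the subgroup $\Gamma'$ really is finite-index in the (conjugated) domain and that the hypotheses of Lemma~\ref{lem:going up} (the source group acting cofinitely and holomorphically on $\HH$, the image being a cofinite Fuchsian group) remain satisfied throughout. These all hold because $\Gamma_i$ are lattices in $G$ and cofiniteness of a modular embedding is preserved under conjugation and passage to finite-index subgroups; spelling this out is routine. One should also note that Proposition~\ref{prop:uniqueness of period map} is applied to the restricted data, which is legitimate since a finite-index subgroup of a cofinite Fuchsian group is again cofinite Fuchsian and the restriction of $\rho_i$ is still non-trivial.
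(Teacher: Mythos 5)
Your proof is correct and follows essentially the same route as the paper, which simply observes in one sentence that $\lambda(p,\rho)$ is unchanged under the $G\times G$-action and under passage to a finite-index subgroup via Lemma~\ref{lem:going up}; your write-up just makes explicit the reduction of weak commensurability to these two moves and the use of Proposition~\ref{prop:uniqueness of period map} to identify the restricted period maps.
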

\begin{proof}
The value of $\lambda(p,\rho)$ clearly remains unchanged 
under the $G\times G$-action and under passage to a finite-index subgroup by Lemma \ref{lem:going up}.
\end{proof}
\par
\begin{defn}
For a modular embedding, we define the \textit{commensurator} 
\[\Comm(p,\rho) := \set{(g,h)\in G\times G}{(p,\rho), g(p,\rho)h^{-1}\ \text{are commensurable}}.\]
\end{defn}
\par
\begin{rem}
$\Comm(p,\rho)$ is a group containing $\Gamma =\dom(\rho)$ via $\gamma\mapsto (\gamma,\rho(\gamma))$. Moreover, for two
modular embeddings $(p_i,\rho_i)$, $i=1,2$ that are commensurable, we have $\Comm(p_1,\rho_1) = \Comm(p_2,\rho_2)$.
\end{rem}
\par
\begin{draftcomment}
If $(g,h)$, $(\tilde g,\tilde h)\in \Comm(p,\rho)$, then $g(p,\rho)h^{-1}$ 
and $\tilde g(p,\rho)\tilde h^{-1}$ are commensurable by transitivity. 
Thus, since acting by $(\tilde g^{-1},\tilde h^{-1})$ on the pair 
$g(p,\rho)h^{-1}, \tilde g(p,\rho)\tilde h^{-1}$ preserves commensurability,
we have that $\tilde g^{-1}g(p,\rho)h^{-1}\tilde h$, $(p,\rho)$ are commensurable 
and hence $(\tilde g^{-1}g,\tilde h^{-1} h)\in \Comm(p,\rho)$.
\par
Let $(g,h)\in \Comm(p_1,\rho_1)$. Then $(p_1,\rho_1)$ and $g^{-1}(p_1,\rho_1)h$ 
are commensurable, thus $(p_2,\rho_2)$, $g^{-1}(p_1,\rho_1)h$ are commensurable, 
thus $g(p_2,\rho_2)h^{-1}$, $(p_1,\rho_1)$ are commensurable, 
thus $g(p_2,\rho_2)h^{-1}$, $(p_2,\rho_2)$ are commensurable, thus $(g,h)\in \Comm(p_2,\rho_2)$.
\end{draftcomment}

 Further, $\Comm(p,\rho)$ maps into the commensurator 
\[\Comm(\dom(\rho)) = \set{h\in G}{h\dom(\rho)h^{-1}, \dom(\rho)\ \text{are commensurable}}\]
by $(g,h)\mapsto h$, and this map is injective if $p$ is not constant. If $p$ is not constant, we can therefore consider $\Comm(p,\rho)$ as a subgroup of $G$. In fact, it maps into $G_p = \set{h\in G}{\exists g\in G: g\circ p = p \circ h}$ by rigidity.
\par
As in the case of the usual commensurator, we have the following dichotomy. This is proved verbatim as in e.g. \cite[Prop. 6.2.3]{ZimmerErgodic84}.
\par
\begin{prop}
$\Comm(p,\rho)$ is either dense in $G$ or $\Gamma\subgp \Comm(p,\rho)$ is a subgroup of finite index.
\end{prop}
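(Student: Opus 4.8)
The plan is to deduce the proposition from the classical dichotomy for the ordinary commensurator of a lattice in $\PSL_2(\RR)$, whose proof (as in \cite[Prop. 6.2.3]{ZimmerErgodic84}) transfers with only notational changes. Throughout I take $p$ to be non-constant, which is exactly the case in which $\Comm(p,\rho)$ has been identified above with a subgroup of $G=\PSL_2(\RR)$ via $(g,h)\mapsto h$; write $\Gamma=\dom(\rho)$ and let $\Lambda\subgp G$ denote the image of $\Comm(p,\rho)$ under this map. The first thing I would record is that $\Lambda$ is sandwiched between $\Gamma$ and the ordinary commensurator: the diagonal embedding $\Gamma\hookrightarrow\Comm(p,\rho)$ from the remark above gives $\Gamma\subgp\Lambda$, and $(g,h)\mapsto h$ lands in $\Comm(\Gamma)$ by the very definition of weak commensurability, so $\Gamma\subgp\Lambda\subgp\Comm(\Gamma)\subgp G$. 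It then suffices to prove the purely group-theoretic statement that a subgroup $\Lambda$ of $\PSL_2(\RR)$ which contains the lattice $\Gamma$ is either dense in $G$ or contains $\Gamma$ with finite index.

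I would then dichotomize according to whether $\Lambda$ is discrete. If $\Lambda$ is discrete, then, being a discrete subgroup of $G$ that contains the finite-covolume group $\Gamma$, it is itself a lattice, and counting sheets in the (orbifold) covering $\HH/\Gamma\to\HH/\Lambda$ yields $[\Lambda:\Gamma]=\vol(\HH/\Gamma)/\vol(\HH/\Lambda)<\infty$; hence $\Gamma$ is of finite index in $\Comm(p,\rho)$, which is the second alternative.

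If $\Lambda$ is not discrete, I would pass to its closure $H=\overline{\Lambda}$ in $G$. Its identity component $H^0$ is then a closed connected subgroup of positive dimension which is normal in $H$ and therefore normalized by $\Gamma$. By the Borel density theorem the lattice $\Gamma$ is Zariski-dense in $G$ (a semisimple group with no compact factors), so the algebraic subgroup $N_G(H^0)$, containing a Zariski-dense set, must equal $G$; thus $H^0$ is normal in $G$. Since $\PSL_2(\RR)$ is simple (its Lie algebra $\mathfrak{sl}_2(\RR)$ is simple) this forces $H^0=G$, whence $H=G$ and $\Lambda=\Comm(p,\rho)$ is dense in $G$ --- the first alternative.

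I expect the only point needing genuine care to be the sandwiching $\Gamma\subgp\Lambda\subgp\Comm(\Gamma)$ of the first paragraph, which leans on the rigidity results obtained above (Proposition~\ref{prop:uniqueness of period map}); in particular one uses that the companion $g$ of an element $h$ of the commensurator is determined by $h$ when $p$ is non-constant, so that $(g,h)\mapsto h$ is injective and restricts to the identity on the diagonal copy of $\Gamma$. Once that is in place, the remaining dichotomy is the standard commensurator argument, with the Borel density theorem and the simplicity of $\PSL_2(\RR)$ as its only ingredients, and I do not anticipate any real obstacle there.
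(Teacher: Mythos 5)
Your argument is correct and is essentially the proof the paper intends: the same dichotomy via the closure $H$ of $\Comm(p,\rho)$ in $G$, with $H^0$ normalized by $\Gamma$, Borel/Zariski density promoting this to normality in $G$, and simplicity of $G$ forcing $H^0=\{e\}$ (discrete case, finite index by comparing covolumes) or $H^0=G$ (dense case). Your preliminary sandwiching $\Gamma\subgp\Lambda\subgp\Comm(\Gamma)$ and the injectivity of $(g,h)\mapsto h$ for non-constant $p$ are exactly the reductions the paper records just before the proposition.
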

\par
\begin{draftcomment}
Let $H$ be the closure of $\Comm(p,\rho)$ in the Hausdorff topology of $G$, and let $H^0$ be the connected component of the identity. $\Gamma$ normalizes $H^0$ since $\Gamma\subgp H$. Then the Lie algebra $L(H^0)$ is a subspace of the Lie algebra $L(G)$ of $G$ invariant under $\Ad(\Gamma)$. Since $\Gamma$ is Zariski-dense in $G$, $L(H^0)$ is even invariant under $\Ad(G)$, hence $G$ normalizes $H^0$. Since $G$ is simple, $H^0 = \{e\}$ or $H^0 = G$. In the first case, $H^0$ is discrete, in the second $\Comm(p,\rho)$ is dense. 
\end{draftcomment}
\par
\begin{prop}\label{prop:comm of finite index}
 Suppose we are given a modular embedding $(p,\rho)$ such that $p$ is non-constant and $\rho$ has a nontrivial kernel. Then $\Gamma\subgp \Comm(p,\rho)$ is of finite index.
%can replace infinite kernel by nontrivial kernel since I am in the adjoint case
\end{prop}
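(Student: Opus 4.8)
The plan is to reduce everything to the dichotomy just established: by that proposition it suffices to show that $\Comm(p,\rho)$, viewed as a subgroup of $G=\PSL_2(\RR)$ via $(g,h)\mapsto h$, cannot be dense in $G$. The mechanism I would use is to extract from the nontrivial kernel $K:=\Kern(\rho)$ an infinite \emph{discrete} normal subgroup of $\Comm(p,\rho)$, and then to invoke the fact that a dense subgroup of the simple group $G$ cannot normalize an infinite discrete subgroup.

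First I would establish two auxiliary facts. (a) The stabilizer $\Stab_G(p)=\set{h\in G}{p\circ h=p}$ is discrete: it is closed because $p$ is continuous, and if it were not discrete it would, by Cartan's theorem, contain a one-parameter subgroup $\exp(tX)$ with $0\neq X$ in the Lie algebra $\mathfrak{g}$ of $G$; differentiating the identity $p(\exp(tX)z)\equiv p(z)$ at $t=0$ gives $p'(z)\,\xi(z)\equiv 0$, where $\xi$ is the holomorphic vector field on $\HH$ induced by $X$, and since $\xi\not\equiv0$ vanishes at only finitely many points, $p'$ would vanish identically, contradicting that $p$ is non-constant. (b) $K$ is infinite: as a lattice, $\Gamma$ is Zariski dense in $G$, so the Zariski closure of the normal subgroup $K\nsgpeq\Gamma$ is a normal algebraic subgroup of the simple group $G$; since $K\neq\{\id\}$, this closure is all of $G$, and in particular $K$ is infinite.

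Next I would set up the relevant homomorphism. Recall from the discussion preceding this proposition that for $(g,h)\in\Comm(p,\rho)$ one has $g\circ p=p\circ h$ (commensurable modular embeddings share their period map, by Proposition~\ref{prop:uniqueness of period map}), that $(g,h)\mapsto h$ embeds $\Comm(p,\rho)$ into $G$, and that $\Gamma\subseteq\Comm(p,\rho)$ corresponds to $\gamma\mapsto(\rho(\gamma),\gamma)$. The other coordinate projection then gives a homomorphism $\tau\colon\Comm(p,\rho)\to G$, $(g,h)\mapsto g$, whose restriction to $\Gamma$ is $\rho$. Hence $\Kern(\tau)$ is normal in $\Comm(p,\rho)$ and contains $K$, so it is infinite by (b); and if $(g,h)\in\Kern(\tau)$ then $g=\id$, whence $p\circ h^{-1}=p$ and $h\in\Stab_G(p)$, so $\Kern(\tau)\subseteq\Stab_G(p)$ is discrete by (a).

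Finally, suppose $\Comm(p,\rho)$ were dense in $G$. It normalizes $\Kern(\tau)$, hence also the closure $\overline{\Kern(\tau)}$; since the normalizer of a closed subgroup is closed and $\Comm(p,\rho)$ is dense, $\overline{\Kern(\tau)}$ would be normal in all of $G$, hence equal to $G$ by simplicity (it is infinite), so $\Kern(\tau)$ would be dense in $G$; but a discrete subgroup is closed, which forces $\Kern(\tau)=G$, absurd. Therefore $\Comm(p,\rho)$ is not dense, and the dichotomy yields that $\Gamma$ has finite index in $\Comm(p,\rho)$. I expect the only genuinely delicate point to be fact (a), the discreteness of $\Stab_G(p)$, together with the observation that $\Kern(\rho)$ is carried into it; the remaining steps are soft.
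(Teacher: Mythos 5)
Your proof is correct, and it reaches the contradiction by a genuinely different mechanism than the paper's. Both arguments suppose $\Comm(p,\rho)$ is dense and play the nontrivial kernel off against the simplicity of $G$, but the paper's pivotal step is analytic: density forces $G_p=G$, because for $h_n\to h$ the associated $g_n$ (with $g_n\circ p=p\circ h_n$) form a Cauchy sequence by the Schwarz--Pick lemma; $\rho$ then extends to a homomorphism $\rho'\colon G\to G$ whose kernel is a nontrivial proper normal subgroup of the simple group $G$. You bypass this limiting argument entirely: you exhibit $\Kern(\tau)\subseteq \Stab_G(p)$ as a nontrivial discrete normal subgroup of $\Comm(p,\rho)$ --- discreteness coming from Cartan's closed subgroup theorem plus the fact that a nonzero infinitesimal M\"obius vector field has at most two zeros, so $p'\xi\equiv 0$ forces $p'\equiv 0$ --- and then a purely topological-group argument (the normalizer of a closed subgroup is closed, so a dense normalizing subgroup makes the discrete, hence closed, subgroup $\Kern(\tau)$ normal in all of $G$) finishes. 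The paper's route yields the stronger intermediate fact $G_p=G$ under density, with no discreteness input; yours keeps the analysis to a minimum and makes transparent exactly where the kernel hypothesis enters. One economy you could make: since $\PSL_2(\RR)$ is abstractly simple, a \emph{nontrivial} discrete subgroup normalized by a dense subgroup already yields the contradiction, so your step (b) establishing via Borel density that $\Kern(\rho)$ is infinite is not actually needed.
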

\par
\begin{proof}
 Assume $\Comm(p,\rho)$ is dense in $G$. We claim that $G_p = G$. For let $h\in G$, and let $h_n\in \Comm(p,\rho)$ be a sequence such that $h_n\to h$ for $n\to \infty$ in the Hausdorff topology of $G$. For each $h_n$ there is $g_n\in G$ such that $g_n\circ p = p \circ h_n$. We claim that $(g_n)_n$ converges to $g\in G$. We show that $g_n$ is a Cauchy sequence, \ie for all $\varepsilon >0$ there is $n_0$ such that for all $n,m>n_0$ and $z\in \HH$, $d_{\HH}(g_nz,g_mz) < \varepsilon$. It suffices to show this only for all $z$ in some open subset of $\HH$, \eg in $p(\HH)$.
%Claim: if g_n(z) converges pointwise at 3 or more points then the limit exists and is a moebius
%transformation
Then  by the Schwarz-Pick lemma,
\[d_{\HH}(g_nz,g_mz) = d_{\HH}(g_np(w),g_mp(w)) \leq d_{\HH}(h_nw,h_mw) \to 0\]
uniformly in $w\in \HH$.
Thus $g_n\to g$, and $gp(z) = \lim_n g_n(p(z)) = \lim_n p(h_n(z)) = p(\lim_n h_n(z)) = p(hz)$ by continuity of $p$, and $h\in G_p$.
\par
If $G_p = G$, then $\rho$ admits an extension $\rho':G\to G$ by definition of $G_p$. But then $\ker(\rho')$ is a nontrivial, proper normal subgroup of $G$, contradicting the fact that $G$ is simple.
\end{proof}
\par
\begin{rem}
 In both our examples of Section \ref{sec:examples}, there is a nontrivial kernel. Thus we can apply Proposition \ref{prop:comm of finite index}. Since in both cases, $\deg(\ol{p}) = 1$ and the image group is $\SL_2(\ZZ)$, which is finitely maximal (\ie it is not properly contained in a bigger Fuchsian group), we find that the commensurator of $(\rho_{\L22,2},p_{\L22,2})$, respectively $(\rho_{\Qmod9,4}, p_{\Qmod9,4})$ coincides with $\Gamma_{\Theta}$.
\end{rem}

\bibliographystyle{amsalpha}
\bibliography{kappes_biblio}
\end{document}